\newcommand{\menge}[2]{\big\{{#1} \mid {#2}\big\}}
\newcommand{\emp}{\ensuremath{{\varnothing}}}
\newcommand{\scal}[2]{\left\langle{#1}\mid {#2} \right\rangle}
\newcommand{\vuo}{\ensuremath{\mbox{\footnotesize$\square$}}}
\newcommand{\HH}{\ensuremath{\mathcal H}}
\newcommand{\XX}{\ensuremath{\mathcal X}}
\newcommand{\GG}{\ensuremath{\mathcal G}}
\newcommand{\YY}{\ensuremath{\mathcal Y}}
\newcommand{\HHH}{\ensuremath{\boldsymbol{\mathcal H}}}
\newcommand{\XXX}{\ensuremath{\boldsymbol{\mathcal X}}}
\newcommand{\YYY}{\ensuremath{\boldsymbol{\mathcal Y}}}
\newcommand{\KKK}{\ensuremath{\boldsymbol{\mathcal K}}}
\newcommand{\GGG}{\ensuremath{\boldsymbol{\mathcal G}}}
\newcommand{\CCC}{\ensuremath{\boldsymbol{C}}}
\newcommand{\MM}{\ensuremath{\boldsymbol{M}}}
\newcommand{\NNN}{\ensuremath{\boldsymbol{N}}}
\newcommand{\sri}{\ensuremath{\operatorname{sri}}}
\newcommand{\RR}{\ensuremath{\mathbb R}}
\newcommand{\NN}{\ensuremath{\mathbb N}}
\newcommand{\dom}{\ensuremath{\operatorname{dom}}}
\newcommand{\prox}{\ensuremath{\operatorname{prox}}}
\newcommand{\cart}{\ensuremath{\mbox{\huge{$\times$}}}}
\newcommand{\argmin}{\ensuremath{\operatorname{argmin}}}
\newcommand{\ran}{\ensuremath{\operatorname{ran}}}
\newcommand{\zer}{\ensuremath{\operatorname{zer}}}
\newcommand{\gra}{\ensuremath{\operatorname{gra}}}
\newcommand{\vv}{\ensuremath{\boldsymbol{v}}}
\newcommand{\sss}{\ensuremath{\boldsymbol{s}}}
\newcommand{\xx}{\ensuremath{\boldsymbol{x}}}
\newcommand{\pp}{\ensuremath{\boldsymbol{p}}}
\newcommand{\qq}{\ensuremath{\boldsymbol{q}}}
\newcommand{\yy}{\ensuremath{\boldsymbol{y}}}
\newcommand{\rr}{\ensuremath{\boldsymbol{r}}}
\newcommand{\zz}{\ensuremath{\boldsymbol{z}}}
\newcommand{\bb}{\ensuremath{\boldsymbol{b}}}
\newcommand{\cc}{\ensuremath{\boldsymbol{c}}}
\newcommand{\aaa}{\ensuremath{\boldsymbol{a}}}
\newcommand{\BB}{\ensuremath{\boldsymbol{B}}}
\newcommand{\LL}{\ensuremath{\boldsymbol{L}}}
\newcommand{\AAA}{\ensuremath{\boldsymbol{A}}}
\newcommand{\BBB}{\ensuremath{\boldsymbol{B}}}
\newcommand{\DD}{\ensuremath{\boldsymbol{D}}}
\newcommand{\Id}{\ensuremath{\operatorname{Id}}}
\newcommand{\weakly}{\ensuremath{\rightharpoonup}}
\newtheorem{theorem}{Theorem}[section]
\newtheorem{corollary}[theorem]{Corollary}
\theoremstyle{plain}{\theorembodyfont{\rmfamily}
}
\theoremstyle{plain}{\theorembodyfont{\rmfamily}
}
\theoremstyle{plain}{\theorembodyfont{\rmfamily}
}
\theoremstyle{plain}{\theorembodyfont{\rmfamily}
\newtheorem{example}[theorem]{Example}}
\theoremstyle{plain}{\theorembodyfont{\rmfamily}
\newtheorem{problem}[theorem]{Problem}}
\theoremstyle{plain}{\theorembodyfont{\rmfamily}
\newtheorem{remark}[theorem]{Remark}}
\theoremstyle{plain}{\theorembodyfont{\rmfamily}
}
\definecolor{labelkey}{rgb}{0,0.08,0.45}
\definecolor{refkey}{rgb}{0,0.6,0.0}
\definecolor{Brown}{rgb}{0.45,0.0,0.05}
\definecolor{dgreen}{rgb}{0.00,0.49,0.00}
\definecolor{dblue}{rgb}{0,0.08,0.75}
\numberwithin{equation}{section}
\begin{document}
\title{\sffamily\huge 
A splitting algorithm for system of composite monotone inclusions}
\author{ D\hspace{-1.6ex}\raise 0.4ex\hbox{-}\hspace{.8ex}inh D\~ung$^1$
and B$\grave{\text{\u{a}}}$ng C\^ong V\~u$^2$\\[5mm]
\small$^1$ Information Technology institut\\
\small 144 Xuan Thuy, Cau Giay, Ha Noi Vietnam,
 \url{dinhzung@gmail.com}\\
\small$^2$  Vietnam national university Hanoi\\
\small Department of mathematics,\\
\small  334 Nguyen Trai, Thanh Xuan, Hanoi, Vietnam,
\small\url{vu@ann.jussieu.fr}\\
}

\date{}
\maketitle
\begin{abstract}
We propose a splitting algorithm for 
solving a system of composite monotone inclusions formulated in 
the form of the extended set of solutions in 
real Hilbert spaces. The resluting algorithm 
is a an extension of the algorithm in \cite{plc13b}.
The weak convergence of the algorithm proposed is proved. 
Applications to minimization problems is demonstrated.
\end{abstract}

{\bf Keywords}: 
coupled system,
monotone inclusion,
monotone operator,
operator splitting,
lipschizian,
forward-backward-forward algorithm,
composite operator,
duality,
primal-dual algorithm

{\bf Mathematics Subject Classifications (2010)} 
47H05, 49M29, 49M27, 90C25 

\section{Introduction}
Let $\HH$ be a real Hilbert space,
let $A\colon\HH\to 2^{\HH}$ be a set-valued operator.
The domain and the graph of $A$ are respectively defined by 
$\dom A=\menge{x\in\HH}{Ax\neq\emp}$ and 
$\gra A=\menge{(x,u) \in \HH\times\HH}{u\in Ax}$.
We denote by $\zer A=\menge{x\in\HH}{0\in Ax}$ the set of zeros 
of $A$, and by 
$\ran A=\menge{u\in\HH}{(\exists\; x\in\HH)\; u\in Ax}$ 
the range of $A$. The inverse of $A$ is
$A^{-1}\colon\HH\mapsto 2^{\HH}\colon u\mapsto 
\menge{x\in\HH}{u\in Ax}$.
Moreover, $A$ is monotone if 
\begin{equation}
 (\forall(x,y)\in\HH\times \HH)\;
 (\forall (u,v)\in Ax\times Ay)\quad \scal{x-y}{u-v} \geq 0,
\end{equation}
and maximally monotone if it is monotone and there exists no 
monotone operator 
$B\colon\HH\to2^\HH$ such that $\gra B$ properly contains $\gra A$. 

\noindent 
A basis problem in monotone operator theory is to find 
a zero point of the sum of two maximally monotone operators $A$
and $B$ acting on a real Hilbert space $\HH$, that is, find 
$\overline{x}\in\HH$ such that
\begin{equation}
\label{prob:1}
 0\in A\overline{x} + B\overline{x}.
\end{equation}
Suppose that the problem \eqref{prob:1} has at least one solution $\overline{x}$. Then 
there exists $\overline{v}\in B\overline{x}$ such that $-\overline{v}\in A\overline{x}$.
The set of all such pairs $(\overline{x},\overline{v})$ define the extended set of solutions 
to the problem \eqref{prob:1} \cite{Eck08},
\begin{equation}
 E(A,B) = \menge{(\overline{x},\overline{v})}{\overline{v}\in B\overline{x},-\overline{v}\in A\overline{x} }.
\end{equation}
Inversely, if $E(A,B)$ is non-empty and $(\overline{x},\overline{v})\in E(A,B)$, 
then the set of solutions to the problem \eqref{prob:1} is also nonempty since $\overline{x}$
solves \eqref{prob:1} and $\overline{v}$ solves its dual problem \cite{Atto96}, i.e,
\begin{equation}
 0\in B^{-1}v -A^{-1}(-v). 
\end{equation}
It is remarkable that three fundamental methods such as Douglas-Rachford splitting method,
forward-backward splitting method, forward-backward-forward splitting method
converge weakly to points in $E(A,B)$ \cite[Theorem 1]{Svaiter11}, \cite{Siop1}, \cite{Tseng00}.
We next consider a more general problem where one of the operator has a linearly composite.
In this case, the problem \eqref{prob:1} becomes \cite[Eq. (1.2)]{siop2},
 \begin{equation}
\label{prob:2}
 0\in A\overline{x} + (L^*\circ B\circ L)\overline{x},
\end{equation}
where $B$ acts on a real Hilbert space $\GG$ and $L$ 
is a bounded linear operator from $\HH$ to $\GG$. Then, it is shown 
in \cite[Proposition 2.8(iii)(iv)]{siop2} that whenever the set of solutions to 
\eqref{prob:2} is non-empty, the extended set of solutions
\begin{equation}
 E(A,B,L) = \menge{(\overline{x},\overline{v})}{-L^*\overline{v}\in A\overline{x}, 
L\overline{x} \in B^{-1}\overline{v}}
\end{equation}
is non-empty and, for every $(\overline{x},\overline{v}) \in E(A,B,L)$, 
$\overline{v}$ is a solution to the dual problem of \eqref{prob:2} \cite[Eq.(1.3)]{siop2},
\begin{equation}
\label{prob:2b}
 0\in B^{-1}\overline{v} - L\circ A^{-1}\circ(-L^*)\overline{v}.
\end{equation}
Algorithm proposed in \cite[Eq.(3.1)]{siop2} to solve the pair \eqref{prob:2} and \eqref{prob:2b} 
converges weakly to a point in $E(A,B,L)$ \cite[Theorem 3.1]{siop2}. Let us  consider 
the case when monotone inclusions involving the parallel-sum monotone operators. 
This typical inclusion is firstly introduced in 
\cite[Problem 1.1]{plc6} and then studied in \cite{Bang12} and \cite{Bot}. A simple case is  
 \begin{equation}
\label{prob:3}
 0\in A\overline{x} + L^*\circ(B\;\vuo\; D)\circ L\overline{x} + C\overline{x},   
 \end{equation}
where $B$, $D$ act on $\GG$ and $C$ acts on $\HH$, and the sign $\vuo$
denotes the parallel sums operations defined by
\begin{equation}
 B\;\vuo\; D = (B^{-1}+D^{-1})^{-1}.
\end{equation}
Then under the assumption that the set of solutions to \eqref{prob:3} is non-empty,
so is its extended set of solutions defined by
\begin{equation}
\label{e:ext}
 E(A,B,C,D,L) = \menge{(\overline{x},\overline{v})}{-L^*\overline{v}\in
 (A+C)\overline{x}, L\overline{x} \in (B^{-1}+D^{-1})\overline{v}}.
\end{equation}
 Furthermore, if there exists $(\overline{x},\overline{v})\in E(A,B,C,D,L)$, then 
$\overline{x}$ solves \eqref{prob:3} and $\overline{v}$ solves its dual problems defined by
\begin{equation}
 0\in B^{-1}\overline{v} -L\circ(A+C)^{-1}\circ(-L^*)\overline{v} + D^{-1}\overline{v}.
\end{equation}
Under suitable conditions on operators,
the algorithms in \cite{plc6}, \cite{Bot} and \cite{Bang12} 
converge weakly to a point in $E(A,B,C,D,L)$.
We also note that even in the more complex situation when $B$ and $D$ in \eqref{prob:3} 
admit linearly composites structures introduced firstly \cite{plc13b} and then in \cite{Botb}, 
in this case  \eqref{prob:3} becomes
 \begin{equation}
\label{prob:4}
 0\in A\overline{x} + L^*\circ\Big((M^*\circ B\circ M)\;\vuo\; 
(N^*\circ D\circ N)\Big)\circ L\overline{x} + C\overline{x},   
 \end{equation}
where $M$ and $N$ are respectively bounded linear operator from 
$\GG$ to real Hilbert spaces $\YY$ and $\XX$, $B$ and $D$ act on $\YY$ and $\XX$, respectively, 
under suitable conditions on operators, simple calculations show that, 
the algorithm proposed in \cite{plc13b} and \cite{Botb} converge weakly to the points in the 
extended set of solutions, 
 \begin{equation}
\label{e:ext1}
 E(A,B,C,D,L,M,N) = \menge{(\overline{x},\overline{v})}{-L^*\overline{v}\in
 (A+C)\overline{x}, L\overline{x} \in ((M^*\circ B\circ M)^{-1}+(N^*\circ D\circ N)^{-1})\overline{v}}.
\end{equation}
Furthermore, for each $(\overline{x},\overline{v}) 
\in E(A,B,C,D,L,M,N)$, then $\overline{v}$ solves the dual problem
of \eqref{prob:4},
\begin{equation}
 0\in (M^*\circ B\circ M)^{-1}\overline{v} -L\circ(A+C)^{-1}\circ(-L^*)\overline{v} 
+ (N^*\circ D\circ N)^{-1}\overline{v}.
\end{equation}
To sum up, above analysis shows that 
each primal problem formulation mentioned has a dual problem which admits an 
explicit formulation and the corresponding algorithm converges weakly to a point in
the extended set of solutions. 
However, there is a class of inclusions 
in which their dual problems are no longer available, 
for instance, when $A$ is univariate and $C$ is multivariate, as in 
\cite[Problem 1.1]{plc2010}. Therefore, 
it is necessary to find a new way to overcome this limit. 
Observer that the problem in the form of \eqref{e:ext1} can recover both the primal problem and 
dual problem. Hence, it will be more convenience to 
formulate the problem in the form of 
\eqref{e:ext1} to overcome this limitation.
This approach is firstly used in \cite{Bang13b}. In this paper we extend 
it to the following problem to unify 
some recent primal-dual frameworks in the literature. 
\begin{problem}
\label{prob}
Let $m,s$ be strictly positive integers.
For every $i\in \{1,\ldots,m\}$, 
let $(\HH_i,\scal{\cdot}{\cdot})$ be a real Hilbert space,
let $z_i\in\HH_i$, let $A_{i}\colon\HH_i\to 2^{\HH_i}$
be maximally monotone, let $C_i\colon \HH_1\times\ldots\times\HH_m\to\HH_i$ be such that 
\begin{alignat}{2}
\label{coco}
 \Big(\exists \nu_0 \in \left[0,+\infty\right[\Big)
\Big(\forall &(x_i)_{1\leq i\leq m}\in \HH_1\times\ldots\times\HH_m\Big) 
 \Big(\forall (y_i)_{1\leq i\leq m}\in \HH_1\times\ldots\times\HH_m\Big) \notag \quad\\
& \begin{cases}
\sum_{i=1}^m \|C_i(x_1,\ldots, x_m) - C_i(y_1,\ldots, y_m) \|^2
\leq\nu_{0}^2 \sum_{i=1}^m \|x_i-y_i \|^2\\
\sum_{i=1}^m \scal{C_i(x_1,\ldots, x_m) - C_i(y_1,\ldots, y_m)}{x_i-y_i} \geq 0.
\end{cases}
\end{alignat}
For every $k\in\{1,\ldots, s\}$, 
let $(\GG_k,\scal{\cdot}{\cdot})$, $(\YY_k,\scal{\cdot}{\cdot})$ and $(\XX_k,\scal{\cdot}{\cdot})$ 
be real Hilbert spaces, 
let $r_k \in \GG_k$, 
let $B_{k}\colon \YY_k\to2^{\YY_k}$ 
be maximally monotone, let $D_k\colon \XX_k\to 2^{\XX_k}$ be maximally monotone,
let $M_k\colon \GG_k\to \YY_k$ and 
$N_k\colon \GG_k\to \XX_k$  be  bounded linear operators, and 
every $i\in\{1,\ldots, m\}$, 
let $L_{k,i}\colon\HH_i \to\GG_k$ 
be a  bounded linear operator.
The  problem is to find $\overline{x}_1 \in \HH_1,\ldots, \overline{x}_m \in \HH_m$ and 
$\overline{v}_1 \in \GG_1,\ldots, \overline{v}_s \in \GG_s$ such that
\begin{alignat}{2}\label{dualin}
\begin{cases}
z_1-\displaystyle\sum_{k=1}^s L_{k,1}^*\overline{v}_k\in A_1\overline{x}_1 
+ C_1(\overline{x}_1,\ldots,\overline{x}_m) \\
  \vdots\quad\\
z_m-\displaystyle\sum_{k=1}^s L_{k,m}^*\overline{v}_k\in A_m\overline{x}_m
 + C_m(\overline{x}_1,\ldots,\overline{x}_m)\\
\displaystyle\sum_{i=1}^mL_{1,i}\overline{x}_i-r_1 \in 
 ( M^{*}_1\circ B_{1}\circ M_1)^{-1}\overline{v}_1 
+  ( N^{*}_1\circ D_{1}\circ N_1)^{-1}\overline{v}_1\\
  \vdots\quad\\
\displaystyle\sum_{i=1}^mL_{s,i}\overline{x}_i-r_s \in 
( M^{*}_s\circ B_{s}\circ M_s)^{-1} \overline{v}_s +
 ( N^{*}_s\circ D_{s}\circ N_s)^{-1} \overline{v}_s.
\end{cases}
\end{alignat}
We denote by $\Omega$ the set of solutions to \eqref{dualin}. 
\end{problem}
Here are some connections to existing primal-dual problems in the literature.
\begin{enumerate}
 \item In Problem \ref{prob}, 
set $m=1$, $(\forall k\in\{1,\ldots,s\})\; L_{k,1} = \Id$,
then by removing $\overline{v}_1,\ldots, \overline{v}_s$ from \eqref{dualin}, 
we obtain the primal inclusion in \cite[Eq.(1.7)]{plc13b}.
Furthermore, by removing $\overline{x}_1$ from \eqref{dualin}, we obtain the dual inclusion which 
is weaker than the dual inclusion in \cite[Eq.(1.8)]{plc13b}. 
 \item In Problem \ref{prob}, 
set $m=1$, $C_1$ is restricted to be cocoercive (i.e., $C_{1}^{-1}$ is strongly monotone),
then by removing $\overline{v}_1,\ldots, \overline{v}_s$ from \eqref{dualin}, 
we obtain the primal inclusion in \cite[Eq.(1.1)]{Botb}.
Furthermore, by removing $\overline{x}_1$ from \eqref{dualin}, we obtain the dual inclusion which 
is weaker than the dual inclusion in \cite[Eq.(1.2)]{Botb}. 
\item In Problem \ref{prob}, set $(\forall k\in\{1,\ldots,s\})\;   \YY_k = \XX_k =\GG_k$ and 
$M_k= N_k = \Id$, then we obtain the system of inclusions in \cite[Eq.(1.3)]{Bang13b}. Furthermore,
if for every $i\in\{1,\ldots,m\},\; C_i$ is restricted on $\HH_i$ and $(D_{k}^{-1})_{1\leq k\leq s}$ are 
single-valued and Lipschitzian,
then by removing respectively $\overline{v}_1,\ldots, \overline{v}_s$ and
 $\overline{x}_1,\ldots, \overline{x}_m$, we obtain respectively 
the primal inclusion in \cite[Eq.(1.2)]{Combettes13} and the dual inclusion in \cite[Eq.(1.3)]{Combettes13}.
\item In Problem \ref{prob}, set $s=m$, $(\forall i\in\{1,\ldots,m\})\; z_i= 0, A_i = 0$ and 
$(\forall k\in\{1,\ldots,s\})\; r_k =0, (k\neq i)\; L_{k,i} = 0$. 
 Then, we obtain the dual inclusion in 
\cite[Eq.(1.2)]{Botc} where $(D^{-1}_k)_{1\leq k\leq s}$ are single-valued and Lipschitzian. 
Moreover, by removing the variables 
 $\overline{v}_1,\ldots,\overline{v}_s$, we obtain the primal inclusion in \cite[Eq.(1.2)]{Botc}.
\end{enumerate}

In the present paper, we develop the splitting technique in \cite{plc13b} which is reused in \cite{Botb},
and base on the convergence result of the algorithm proposed in \cite{Combettes13},
we propose a splitting algorithm for 
solving Problem \ref{prob} and prove its convergence in Section  \ref{Algocon}. 
We provide some  application examples in the last section.

\noindent{\bf Notations.} (See  \cite{livre1}) 
 The scalars product and the  
norms of all Hilbert spaces used in this paper are denoted respectively by 
$\scal{\cdot}{\cdot}$ and $\|\cdot\|$. 
We denote by $\mathcal{B}(\HH,\GG)$
the space of all bounded linear operators from $\HH$ to $\GG$. 
The symbols $\weakly $ and $\to$ denote respectively 
weak and strong convergence.
The resolvent of $A$ is
\begin{equation}
 J_A=(\Id + A)^{-1}, 
\end{equation}
where $\Id$ denotes the identity operator on $\HH$.
We say that $A$ is uniformly monotone 
at $x\in\dom A$ if there exists an 
increasing function $\phi\colon\left[0,+\infty\right[\to 
\left[0,+\infty\right]$ vanishing only at $0$ such that 
\begin{equation}\label{oioi}
\big(\forall u\in Ax\big)\big(\forall (y,v)\in\gra A\big)
\quad\scal{x-y}{u-v}\geq\phi(\|x-y\|).
\end{equation}
 The class of all lower semicontinuous convex functions 
$f\colon\HH\to\left]-\infty,+\infty\right]$ such 
that $\dom f=\menge{x\in\HH}{f(x) < +\infty}\neq\emp$ 
is denoted by $\Gamma_0(\HH)$. Now, let $f\in\Gamma_0(\HH)$.
The conjugate of $f$ is the function $f^*\in\Gamma_0(\HH)$ defined by
$f^*\colon u\mapsto
\sup_{x\in\HH}(\scal{x}{u} - f(x))$, and the subdifferential 
of $f\in\Gamma_0(\HH)$ is the maximally monotone operator 
\begin{equation}
 \partial f\colon\HH\to 2^{\HH}\colon x
\mapsto\menge{u\in\HH}{(\forall y\in\HH)\quad
\scal{y-x}{u} + f(x) \leq f(y)}
\end{equation} 
with inverse given by
\begin{equation}
(\partial f)^{-1}=\partial f^*.
\end{equation}
Moreover,
the proximity operator of $f$ is
\begin{equation}
\prox_f=J_{\partial f} \colon\HH\to\HH\colon x
\mapsto\underset{y\in\HH}{\argmin}\: f(y) + \frac12\|x-y\|^2.
\end{equation}
\section{Algorithm and convergence}
\label{Algocon}
The main result of the paper can be now stated in which we introduce 
our splitting algorithm, prove its convergence and provide the connections 
to existing work.

\begin{theorem}
\label{t:2} In Problem \ref{prob}, suppose that $\Omega\not=\emp$ and that 
\begin{equation}
\label{e:cons2}
 \beta = \nu_0+\sqrt{\sum_{i=1}^m\sum_{k=1}^s\|N_kL_{k,i}\|^2 
+\max_{1\leq k\leq s}(\|N_k\|^2+\|M_k\|^2)} > 0. 
\end{equation}
For every $i\in\{1,\ldots,m\}$,
let $(a^{i}_{1,1,n})_{n\in\NN}$, $(b^{i}_{1,1,n})_{n\in\NN}$, $(c^{i}_{1,1,n})_{n\in\NN}$ 
be absolutely 
summable sequences in $\HH_i$, 
for every $k\in\{1,\ldots,s\}$,
let $(a^{k}_{1,2,n})_{n\in\NN}$, $(c^{k}_{1,2,n})_{n\in\NN}$ be  
absolutely summable sequences in $\GG_k$,
let $(a^{k}_{2,1,n})_{n\in\NN}$
$(b^{k}_{2,1,n})_{n\in\NN}$, $(c^{k}_{2,1,n})_{n\in\NN}$ absolutely 
summable sequences in $\XX_k$,
$(a^{k}_{2,2,n})_{n\in\NN}$, $(b^{k}_{2,2,n})_{n\in\NN}$, $(c^{k}_{2,2,n})_{n\in\NN}$ 
be  absolutely summable sequences in $\YY_k$. For every $i\in\{1,\ldots,m\}$ and $k\in\{1,\ldots,s\}$,
let $x^{i}_{1,0} \in \HH_i$, $x_{2,0}^k \in \GG_k$ and 
$v_{1,0}^k \in \XX_k$, $v_{2,0}^k \in \YY_k$,  
let $\varepsilon \in \left]0,1/(\beta+1)\right[$, 
let $(\gamma_n)_{n\in\NN}$ be sequence in 
$\left[\varepsilon, (1-\varepsilon)/\beta \right]$ and set
\begin{equation}
\label{e:forwardback}
\begin{array}{l}
\operatorname{For}\;n=0,1,\ldots,\\
\left\lfloor
\begin{array}{l}
\operatorname{For}\;i=1,\ldots, m\\
\left\lfloor
\begin{array}{l}
s_{1,1,n}^i = x_{1,n}^i -\gamma_n\big(C_i(x_{1,n}^1,\ldots,x_{1,n}^m) 
+ \sum_{k=1}^sL_{k,i}^*N_{k}^*v_{1,n}^k + a_{1,1,n}^i\big)\\
p_{1,1,n}^i = J_{\gamma_n A_i}(s_{1,1,n}^i +\gamma_n z_i) + b_{1,1,n}^i \\
\end{array}
\right.\\[2mm]
\operatorname{For}\;k=1,\ldots,s\\
\left\lfloor
\begin{array}{l}
p_{1,2,n}^k = x_{2,n}^k +\gamma_n\big( 
 N_{k}^*v_{1,n}^k - M_{k}^*v_{2,n}^k + a_{1,2,n}^k \big)\\
s_{2,1,n}^k = v_{1,n}^k + \gamma_n\big( 
\sum_{i=1}^mN_kL_{k,i} x_{1,n}^i - N_kx_{2,n}^k + a_{2,1,n}^k \big)\\
p_{2,1,n}^k = s_{2,1,n}^k-\gamma_n \big(N_kr_k+ 
J_{\gamma_{n}^{-1}D_k}(\gamma_{n}^{-1}s_{2,1,n}^k -N_kr_k) + b_{2,1,n}^k \big)\\
q_{2,1,n}^k = p_{2,1,n}^k 
+\gamma_n\big( N_k\sum_{i=1}^mL_{k,i}p_{1,1,n}^i-N_kp_{1,2,n}^k +c_{2,1,n}^k \big) \\
v_{1,n+1}^k = v_{1,n}^k - s_{2,1,n}^k + q_{2,1,n}^k\\
s_{2,2,n}^k = v_{2,n}^k + \gamma_n\big( 
M_kx_{2,n}^k + a_{2,2,n}^k \big)\\
p_{2,2,n}^k = s_{2,2,n}^k-\gamma_n \big( 
J_{\gamma_{n}^{-1}B_k}(\gamma_{n}^{-1}s_{2,2,n}^k) + b_{2,2,n}^k \big)\\
q_{2,2,n}^k = p_{2,2,n}^k 
+\gamma_n\big( M_kp_{1,2,n}^k +c_{2,2,n}^k \big) \\
v_{2,n+1}^k = v_{2,n}^k - s_{2,2,n}^k + q_{2,2,n}^k\\
q_{1,2,n}^k = p_{1,2,n}^k +\gamma_n\big(
N_{k}^*p_{2,1,n}^k - M_{k}^*p_{2,2,n}^k + c_{1,2,n}^k \big)\\
x_{2,n+1}^k = x_{2,n}^k -p_{1,2,n}^k+ q_{1,2,n}^k\\
\end{array}
\right.\\[2mm]
\operatorname{For}\;i=1,\ldots, m\\
\left\lfloor
\begin{array}{l}
q_{1,1,n}^i = p_{1,1,n}^i -\gamma_n\big(C_i(p_{1,1,n}^1,\ldots,p_{1,1,n}^m) 
+ \sum_{k=1}^sL_{k,i}^*N^{*}_kp_{2,1,n}^k + c_{1,1,n}^{i} \big)\\
x_{1,n+1}^i = x_{1,n}^i -s_{1,1,n}^i+ q_{1,1,n}^i.\\
\end{array}
\right.\\[2mm]
\end{array}
\right.\\[2mm]
\end{array}
\end{equation}
Then the following hold for each $i\in\{1,\ldots,m\}$ and $k\in\{1,\ldots,s\}$.
\begin{enumerate}
 \item \label{t:2i}
$\sum_{n\in\NN}\|x_{1,n}^i - p_{1,1,n}^i \|^2 < +\infty $\quad\text{and}\quad 
$\sum_{n\in\NN}\|x_{2,n}^k - p_{1,2,n}^k \|^2 < +\infty $.
 \item \label{t:2ii}
$\sum_{n\in\NN}\|v_{1,n}^k - p_{2,1,n}^k \|^2 < +\infty $\quad \text{and}\quad
$\sum_{n\in\NN}\|v_{2,n}^k - p_{2,2,n}^k \|^2 < +\infty $.
\item \label{t:2iii}
 $x_{1,n}^i\weakly \overline{x}_{1,i}$, 
$v_{1,n}^k\weakly \overline{v}_{1,k}$,
$v_{2,n}^k\weakly \overline{v}_{2,k}$ and 
 such that 
$ M_{k}^*\overline{v}_{2,k} 
=  N_{k}^*\overline{v}_{1,k}$ and that 
$(\overline{x}_{1,1},\ldots,\overline{x}_{1,m},N_{1}^*\overline{v}_{1,1},\ldots,N_{s}^*\overline{v}_{1,s}) \in\Omega$.
\item 
\label{t:3iv} 
Suppose that $A_j$ is uniformly monotone at $\overline{x}_{1,j}$, for some 
$j\in\{1,\ldots,  m\}$, then $x_{1,n}^j \to \overline{x}_{1,j}$. 
\item 
\label{t:3v} 
Suppose that the operator 
$(x_i)_{1\leq i\leq m} \mapsto (C_j(x_i)_{1\leq i\leq m})_{1\leq j\leq m}$ 
is uniformly monotone at 
$(\overline{x}_{1,1},\ldots,\overline{x}_{1,m})$, then $(\forall i\in\{1,\ldots,m\})\; 
x_{1,n}^i\to \overline{x}_{1,i}$.
 \item \label{t:3vi}
Suppose that there exists $j\in\{1,\ldots,m\}$ and an increasing 
function $\phi_j\colon \left[0,+\infty\right[ \to \left[0,+\infty\right]$ vanishing 
only at $0$
such that
\begin{alignat}{2}
\label{csoco}
\Big(\forall (x_i)_{1\leq i\leq m}&\in \HH_1\times\ldots\times\HH_m\Big) \notag\\
&\sum_{i=1}^m \scal{C_i(x_1,\ldots, x_m) - 
C_i(\overline{x}_{1,1},\ldots,\overline{x}_{1,m}) }{x_i- \overline{x}_{1,i}} \geq 
\phi_j(\|x_j-\overline{x}_{1,j}\|),
\end{alignat}
then $x_{1,n}^j\to\overline{x}_{1,j}$.
\item 
\label{t:3vv} 
Suppose that $D^{-1}_j$ is uniformly monotone at $\overline{v}_{1,j}$, for some 
$j\in\{1,\ldots,  k\}$, then $v_{1,n}^j \to \overline{v}_{1,j}$. 
\item 
\label{t:3viv} 
Suppose that $B^{-1}_j$ is uniformly monotone at $\overline{v}_{2,j}$, for some 
$j\in\{1,\ldots,  k\}$, then $v_{2,n}^j \to \overline{v}_{2,j}$. 
\end{enumerate}
\end{theorem}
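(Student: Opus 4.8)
The plan is to recast Problem~\ref{prob} as a single two-operator inclusion $0\in\AAA\ww+\BBB\ww$ in a product Hilbert space and then to apply the error-tolerant forward-backward-forward scheme of \cite{Combettes13}. Concretely, I would work in $\KKK=(\HH_1\times\cdots\times\HH_m)\times(\GG_1\times\cdots\times\GG_s)\times(\XX_1\times\cdots\times\XX_s)\times(\YY_1\times\cdots\times\YY_s)$, whose generic element $\ww=\big((x_1^i)_i,(x_2^k)_k,(v_1^k)_k,(v_2^k)_k\big)$ matches the four families of iterates in \eqref{e:forwardback}. On $\KKK$ define $\AAA$ as the direct sum of $A_i$ on the $\HH_i$-blocks, the zero operator on the $\GG_k$-blocks, $D_k^{-1}$ on the $\XX_k$-blocks, and $B_k^{-1}$ on the $\YY_k$-blocks (the constants $z_i$, $r_k$ being absorbed as shifts); $\AAA$ is maximally monotone blockwise since each $A_i$, $D_k$, $B_k$ is. The coupling is carried by the bounded linear operator $\SSS$ assembled from the blocks $N_kL_{k,i}$, $N_k$ and $M_k$ together with their negative adjoints, which one checks is skew-adjoint, and the univariate part by $\CCC\colon\ww\mapsto\big((C_i(x_1^1,\ldots,x_1^m))_i,0,0,0\big)$. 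Setting $\BBB=\SSS+\CCC$, monotonicity of $\BBB$ follows because $\SSS$ has vanishing quadratic form and $\CCC$ is monotone by the second inequality in \eqref{coco}, while the first inequality together with a direct estimate of $\|\SSS\|$ gives $\|\BBB\ww-\BBB\ww'\|\le\beta\|\ww-\ww'\|$ with exactly the constant $\beta$ of \eqref{e:cons2}.

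Next I would check that $\zer(\AAA+\BBB)$ projects onto $\Omega$. Writing $0\in\AAA\ww+\BBB\ww$ block by block, the $\GG_k$-block gives $M_k^*v_2^k=N_k^*v_1^k$, and eliminating the auxiliary variables $x_2^k$, $v_2^k$ recovers the system \eqref{dualin} with $\overline{v}_k=N_k^*v_1^k$; hence $\Omega\neq\emp$ yields $\zer(\AAA+\BBB)\neq\emp$, which is the standing hypothesis of the scheme in \cite{Combettes13}. This is also where the relation $M_k^*\overline{v}_{2,k}=N_k^*\overline{v}_{1,k}$ claimed in \ref{t:2iii} comes from.

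The third step is to identify \eqref{e:forwardback} as the forward-backward-forward iteration of \cite{Combettes13} for $0\in\AAA\ww+\BBB\ww$: the $s$-variables are the forward step $\ww_n-\gamma_n\BBB\ww_n$, the $p$-variables the resolvent step $J_{\gamma_n\AAA}$, the $q$-variables the forward correction, and each concluding line of the form $w_{n+1}=w_n-s_n+q_n$ is the FBF update. The blockwise resolvent of $\AAA$ produces $J_{\gamma_n A_i}$ on the $\HH_i$-blocks, the identity on the $\GG_k$-blocks, and, through the Moreau-type identity $J_{\gamma T}x=x-\gamma J_{\gamma^{-1}T^{-1}}(\gamma^{-1}x)$, the operators $J_{\gamma_n^{-1}D_k}$ and $J_{\gamma_n^{-1}B_k}$ appearing in \eqref{e:forwardback} from $D_k^{-1}$ and $B_k^{-1}$. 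The sequences $a,b,c$ then play the role of the absolutely summable errors permitted by that scheme, and the range $\gamma_n\in[\varepsilon,(1-\varepsilon)/\beta]$ is precisely its admissible step-size interval for a $\beta$-Lipschitzian $\BBB$.

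Invoking the convergence theorem of \cite{Combettes13} then delivers the summability statements \ref{t:2i}--\ref{t:2ii} and the weak convergence of $\ww_n$ to some $\ww\in\zer(\AAA+\BBB)$, which through the correspondence above yields \ref{t:2iii}. For the strong-convergence refinements \ref{t:3iv}--\ref{t:3viv} I would use the additional estimate, available from the same analysis, that the duality products between the iterate error $\ww_n-\ww$ and the monotone operators tend to $0$; when $A_j$ (respectively the operator $(x_i)_i\mapsto(C_i(x_1,\ldots,x_m))_i$, $D_j^{-1}$, or $B_j^{-1}$) is uniformly monotone at the relevant limit component, the defining inequality \eqref{oioi} with modulus $\phi$ forces $\phi(\|\cdot\|)\to0$ and hence strong convergence of that block. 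The main obstacle is the bookkeeping of the first and third steps: choosing $\AAA$ and $\SSS$ so that the abstract iteration reproduces \eqref{e:forwardback} verbatim---in particular handling each parallel sum $(M_k^*B_kM_k)^{-1}+(N_k^*D_kN_k)^{-1}$ through the auxiliary variables $x_2^k,v_1^k,v_2^k$ so that the resolvents $J_{\gamma_n^{-1}B_k}$, $J_{\gamma_n^{-1}D_k}$ emerge with the correct arguments and $N_kr_k$ shifts---and verifying that the norm of $\SSS$ matches the square-root term in \eqref{e:cons2}.
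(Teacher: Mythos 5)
Your proposal is correct and follows essentially the same route as the paper: both reformulate Problem~\ref{prob} in a product space, split the parallel sum with an auxiliary primal variable in $\GG_1\oplus\cdots\oplus\GG_s$ and the linear compositions with dual variables in $\XX_k$ and $\YY_k$, identify \eqref{e:forwardback} as the error-tolerant FBF-type scheme of \cite{Combettes13}, and obtain \ref{t:3iv}--\ref{t:3viv} from uniform monotonicity plus the vanishing of the relevant duality products. The only differences are in packaging: the paper does not rebuild the monotone-plus-skew decomposition itself but maps the system onto the two-primal-block, two-dual-block format of \cite[Eq.\ (1.2)]{Combettes13} (the identification \eqref{e:spec}) and cites \cite[Theorem 2.4]{Combettes13} for \ref{t:2i}--\ref{t:2iii}, while for the strong-convergence claims it must, as you would, carry out by hand the estimate you only sketch---the chain \eqref{e:add4}--\eqref{e:vn33}, where skew-adjointness and the identity $\MM^*\overline{\vv}_2=\NNN^*\overline{\vv}_1$ cancel the coupling terms.
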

\begin{proof}
Let us introduce the Hilbert direct sums 
\begin{equation}
\HHH = \HH_1\oplus\ldots\oplus\HH_m, \quad \GGG = \GG_1\oplus\ldots\oplus\GG_s, 
\quad \YYY = \YY_1\oplus\ldots\oplus\YY_s, 
\quad  \XXX = \XX_1\oplus\ldots\oplus\XX_s.
\end{equation}
We use the boldsymbol to indicate the elements in these spaces. 
The scalar products and the norms of 
these spaces are defined in normal way. For example, in $\HHH$, 
\begin{equation}
 \scal{\cdot}{\cdot}\colon (\xx,\yy)\mapsto 
\sum_{i=1}^m \scal{x_i}{y_i}
\quad  \text{and}\quad
\|\cdot\|\colon \xx\mapsto \sqrt{\scal{\xx}{\xx}}.
\end{equation}
Set
\begin{equation}
\label{e:acl}
\begin{cases}
 \AAA\colon \HHH\to 2^{\HHH}\colon \xx \mapsto \cart_{i=1}^m A_ix_i\\
\CCC \colon \HHH\to \HHH\colon \xx \mapsto  (C_i\xx)_{1\leq i\leq m}\\ 
\LL\colon \HHH\to\GGG\colon \xx \mapsto \big(\sum_{i=1}^mL_{k,i}x_i\big)_{1\leq k\leq s}\\
\NNN\colon\GGG\to\XXX\colon \vv \mapsto (N_kv_k)_{1\leq k\leq s}\\
\zz = (z_1,\ldots, z_m), 
\end{cases}
\quad \text{and}\quad
\begin{cases}
 \BB\colon \YYY\to 2^{\YYY}\colon \vv \mapsto \cart_{k=1}^s B_kv_k\\
\DD\colon \XXX\to 2^{\XXX}\colon \vv \mapsto \cart_{k=1}^s D_kv_k\\
\MM\colon\GGG\to\YYY\colon \vv \mapsto (M_kv_k)_{1\leq k\leq s}\\
\rr = (r_1,\ldots, r_s).
\end{cases}
\end{equation}
Then, it follows  from \eqref{coco}  that 
\begin{equation}
\label{e:coso}
 (\forall(\xx,\yy)\in\HHH^2)\quad 
 \|\CCC\xx-\CCC\yy \|\leq \nu_{0}\|\xx-\yy\|
\quad 
\text{and}
\quad 
\scal{\CCC\xx-\CCC\yy}{\xx-\yy}\geq0,
\end{equation}
which shows that $\CCC$ is
$\nu_0$-Lipschitzian and monotone
hence they are
maximally monotone \cite[Corollary 20.25]{livre1}.
Moreover, it follows from \cite[Proposition 20.23]{livre1} 
that $\AAA$, $\BB$ and $\DD$  are maximally monotone. 
Furthermore, 
\begin{equation}
\label{e:conj}
\begin{cases}
 \LL^*\colon \GGG\to\HHH\colon \vv \mapsto \bigg(\sum_{k=1}^s L_{k,i}^*v_k\bigg)_{1\leq i\leq m}\\
\MM^*\colon\YYY\to\GGG\colon\vv\mapsto (M_{k}^*v_k)_{1\leq k\leq s}\\
\NNN^*\colon\XXX\to\GGG\colon\vv\mapsto (N_{k}^*v_k)_{1\leq k\leq s}.\\
\end{cases}
\end{equation}
Then, using \eqref{e:acl} and \eqref{e:conj},
we can rewrite the system of monotone inclusions \eqref{dualin} as
 monotone inclusions in $\KKK = \HHH\oplus\GGG$,
\begin{equation}
\label{e:pr}
 \text{find $(\overline{\xx},\overline{\vv})\in\KKK$ such that }\; 
\begin{cases}
\zz-\LL^*\overline{\vv}\in(\AAA+\CCC)\overline{\xx} \\ 
\LL\overline{\xx}-\rr\in \big((\MM^*\circ\BB\circ\MM)^{-1} 
+ (\NNN^*\circ\DD\circ\NNN)^{-1}\big)\overline{\vv}.
\end{cases}
\end{equation}
It follows from \eqref{e:pr} that there exists 
$\overline{\yy} \in \GGG$  such that 
\begin{equation}
\begin{cases}
\zz-\LL^*\overline{\vv}\in(\AAA+\CCC)\overline{\xx} \\
\overline{\yy} \in  (\MM^*\circ\BB\circ\MM)^{-1}\overline{\vv}\\
\LL\overline{\xx}-\overline{\yy} -\rr\in  (\NNN^*\circ\DD\circ\NNN)^{-1}\overline{\vv}
\end{cases}
\Leftrightarrow
\begin{cases}
 \zz-\LL^*\overline{\vv}\in(\AAA+\CCC)\overline{\xx} \\
\overline{\vv} \in  \MM^*\circ\BB\circ\MM\overline{\yy}\\
\overline{\vv} \in  \NNN^*\circ\DD\circ\NNN(\LL\overline{\xx}-\overline{\yy} -\rr),
 \end{cases}
\end{equation}
which implies that
\begin{equation}
\label{e:ess}
\begin{cases}
 \zz\in (\AAA+\CCC)\overline{\xx} 
+ \LL^*\NNN^*\big(\DD(\NNN\LL\overline{\xx}-\NNN\overline{\yy}-\NNN\rr)\big)\\
0\in\MM^*\circ\BB\circ\MM\overline{\yy}
-\NNN^*\big(\DD(\NNN\LL\overline{\xx}-\NNN\overline{\yy}-\NNN\rr)\big).
\end{cases}
\end{equation}
Since $\Omega \not=\emp$, the problem \eqref{e:ess} possesses at least one solution. 
The problem \eqref{e:ess} is a special case of
the primal problem in 
\cite[Eq.(1.2)]{Combettes13} with 
\begin{equation}
\label{e:spec}
 \begin{cases}
m =2, K =2,\\
\HHH_1 = \HHH,
 \GGG_1 = \XXX,\\
\HHH_2 =\GGG,
\GGG_2 = \YYY,\\
\zz_1 = \zz, ,\zz_2=0,\\
\rr_1 = \NNN\rr,\rr_2 =0,
 \end{cases}
\begin{cases}
 \LL_{1,1} = \NNN\LL,\\
 \LL_{1,2} = -\NNN,\\
\LL_{2,1} = 0,\\ 
\LL_{2,2} = \MM, 
 \end{cases}
\begin{cases}
 \AAA_1 = \AAA,\\ 
 \CCC_1 = \CCC,\\
\AAA_2 = 0,\\
\CCC_2 = 0,\\
\end{cases}
\quad \text{and}\quad
\begin{cases}
 \BBB_1 = \DD, \\
 \DD^{-1}_1 = 0,\\
\BB_2 = \BB,\\
\DD^{-1}_2 = 0. \\
\end{cases}
\end{equation}
It follows from \cite[Proposition 23.16]{livre1} that
\begin{equation}
\label{e:resol}
 (\forall \xx\in\HHH)(\gamma\in\left]0,+\infty\right[)\quad 
J_{\gamma\AAA_1}\xx = (J_{\gamma A_i}x_i)_{1\leq i\leq m}
\end{equation}
and
\begin{equation}
\label{e:resol1}
 (\forall \vv\in\XXX)(\gamma\in\left]0,+\infty\right[)\quad 
J_{\gamma\BBB_1}\vv = (J_{\gamma D_k}v_k)_{1\leq k\leq s}
\; \text{and}\;
 (\forall \vv\in\YYY)\quad J_{\gamma\BB_2}\vv = (J_{\gamma B_k}v_k)_{1\leq k\leq s}.
\end{equation}
Let us set 
\begin{equation}
\label{e:seq}
(\forall n\in\NN)
 \begin{cases}
\aaa_{1,1,n} = (a_{1,1,n}^1,\ldots, a_{1,1,n}^m)\\
\bb_{1,1,n} = (b_{1,1,n}^1,\ldots, b_{1,1,n}^m)\\
\cc_{1,1,n} = (c_{1,1,n}^1,\ldots, c_{1,1,n}^m)\\
\aaa_{1,2,n} = (a_{1,2,n}^1,\ldots, a_{1,2,n}^s)\\
\cc_{1,2,n} = (c_{1,2,n}^1,\ldots, c_{1,2,n}^s)\\
\end{cases}
\quad \text{and}\quad 
(\forall n\in\NN)
 \begin{cases}
\aaa_{2,1,n} = (a_{2,1,n}^1,\ldots, a_{2,1,n}^s)\\
\cc_{2,1,n} = (c_{2,1,n}^1,\ldots, c_{2,1,n}^s)\\
\aaa_{2,2,n} = (a_{2,2,n}^1,\ldots, a_{2,2,n}^s)\\
\bb_{2,2,n} = (b_{2,2,n}^1,\ldots, b_{2,2,n}^s)\\
\cc_{2,2,n} = (c_{2,2,n}^1,\ldots, c_{2,2,n}^s).\\
\end{cases}
\end{equation}
Then it follows from our assumptions that 
every sequence defined in \eqref{e:seq} is absolutely summable.
Moreover, upon setting
\begin{equation}
(\forall n\in\NN)
\begin{cases}
 \xx_{1,n} = (x_{1,n}^1,\ldots, x^{m}_{1,n})\\
\xx_{2,n} = (x_{2,n}^1,\ldots, x_{2,n}^s)\\
\end{cases}
\quad 
\text{and}
\quad 
\begin{cases}
 \vv_{1,n} = (v_{1,n}^1,\ldots, v^{s}_{1,n})\\
\vv_{2,n} = (v_{2,n}^1,\ldots, v_{2,n}^s)\\
\end{cases}
\end{equation}
and
\begin{equation}
 \label{e:seqs}
(\forall n\in\NN)
 \begin{cases}
\sss_{1,1,n} = (s_{1,1,n}^1,\ldots, s_{1,1,n}^m)\\
\pp_{1,1,n} = (p_{1,1,n}^1,\ldots, p_{1,1,n}^m)\\
\qq_{1,1,n} = (q_{1,1,n}^1,\ldots, q_{1,1,n}^m)\\
\pp_{1,2,n} = (p_{1,2,n}^1,\ldots, p_{1,2,n}^s)\\
\qq_{1,2,n} = (q_{1,2,n}^1,\ldots, q_{1,2,n}^s)\\
\end{cases}
\quad \text{and}\quad 
(\forall n\in\NN)
 \begin{cases}
\sss_{2,1,n} = (s_{2,1,n}^1,\ldots, s_{2,1,n}^s)\\
\pp_{2,1,n} = (p_{2,1,n}^1,\ldots, p_{2,1,n}^s)\\
\qq_{2,1,n} = (q_{2,1,n}^1,\ldots, q_{2,1,n}^s)\\
\sss_{2,2,n} = (s_{2,2,n}^1,\ldots, s_{2,2,n}^s)\\
\pp_{2,2,n} = (p_{2,2,n}^1,\ldots, p_{2,2,n}^s)\\
\qq_{2,2,n} = (q_{2,2,n}^1,\ldots, q_{2,2,n}^s)\\
\end{cases}
\end{equation}
and in view of \eqref{e:acl},\eqref{e:conj}, \eqref{e:spec} and \eqref{e:resol}, \eqref{e:resol1}, 
algorithm \eqref{e:forwardback} reduces to a special case of the algorithm in 
\cite[Eq. (2.4)]{Combettes13}.
Moreover, it follows from  \eqref{e:cons2} and \eqref{e:spec}  
that the condition \cite[Eq.(1.1)]{Combettes13} is satisfied. Furthermore,
the conditions on stepsize $(\gamma_n)_{n\in\NN}$ and,
as shown above, every specific conditions on 
operators and the error sequences are also satisfied. To sum up,
every specific conditions in \cite[Problem 1.1]{Combettes13} and  
\cite[Theorem 2.4]{Combettes13} are satisfied.
 
\ref{t:2i}\ref{t:2ii}: These conclusions 
follow from \cite[Theorem 2.4(i)]{Combettes13} 
and \cite[Theorem 2.4(ii)]{Combettes13}, respectively.

\ref{t:2iii}: It follows from \cite[Theorem 2.4(iii)(c)]{Combettes13} 
and \cite[Theorem 2.4(iii)(d)]{Combettes13}
that $\xx_{1,n}\weakly \overline{\xx}_1$, $\xx_{2,n}\weakly \overline{\xx}_2 $ and 
$\vv_{1,n}\weakly \overline{\vv}_1 $, $\vv_{2,n}\weakly \overline{\vv}_2$, 
We next derive from \cite[Theorem 2.4(iii)(a)]{Combettes13} 
and \cite[Theorem 2.4(iii)(b)]{Combettes13} that,
for every $i\in\{1\ldots,m\}$ and  $k\in\{1\ldots,s\}$,
 \begin{equation}
\label{e:inclusion1}
z_i- \sum_{k=1}^s L_{k,i}^*N_{k}^*\overline{v}_{1,k}\in A_i\overline{x}_{1,i} 
+ C_i(\overline{x}_{1,1},\ldots, \overline{x}_{1,m})
\quad \text{and}\quad 
  M^{*}_k\overline{v}_{2,k} = N^{*}_k\overline{v}_{1,k}.
\end{equation}
and
\begin{equation}
\label{e:inclusion2}
N_k\bigg(\sum_{i=1}^m L_{k,i}\overline{x}_{1,i}  -r_k -\overline{x}_{2,k}\bigg)\in
D_{k}^{-1}\overline{v}_{1,k}\quad \text{and}\quad 
 M_k\overline{x}_{2,k}  \in B_{k}^{-1}\overline{v}_{2,k}.\\
\end{equation}
We have
\begin{alignat}{2}
\label{e:sss}
\eqref{e:inclusion2}& \Leftrightarrow \overline{v}_{1,k}\in 
D_{k}\bigg(N_k\bigg(\sum_{i=1}^m L_{k,i}\overline{x}_{1,i}  -r_k -\overline{x}_{2,k}\bigg) \bigg)
\quad \text{and}\quad 
\overline{v}_{2,k}\in B_{k}\big( M_k\overline{x}_{2,k} \big)\\
&\Rightarrow
N^{*}_k\overline{v}_{1,k}\in 
N^{*}_k\bigg(D_{k}\bigg(N_k\bigg(\sum_{i=1}^m L_{k,i}\overline{x}_{1,i}  -r_k -\overline{x}_{2,k}) 
\bigg)\bigg)\bigg)
\; \text{and}\; 
M^{*}_k\overline{v}_{2,k}\in M^{*}_k\big(B_{k}\big( M_k\overline{x}_{2,k}\big)\big)\notag\\
&\Rightarrow
\sum_{i=1}^m L_{k,i}\overline{x}_{1,i}  -r_k -\overline{x}_{2,k}
 \in (N^{*}_k\circ D_{k}\circ N_k)^{-1}(N^{*}_k\overline{v}_{1,k})
\; \text{and}\;  \overline{x}_{2,k}\in (M^{*}_k\circ B_{k}\circ M_k)^{-1}(M^{*}_k\overline{v}_{2,k})\notag\\
&\Rightarrow
\sum_{i=1}^m L_{k,i}\overline{x}_{1,i}  -r_k\in (N^{*}_k\circ D_{k}\circ N_k)^{-1}(N^{*}_k\overline{v}_{1,k})
+(M^{*}_k\circ B_{k}\circ M_k)^{-1}(N^{*}_k\overline{v}_{1,k}).
\end{alignat}
Therefore, \eqref{e:inclusion1} and \eqref{e:sss} shows that 
$(\overline{x}_{1,1},\ldots, \overline{x}_{1,m}, N^{*}_1\overline{v}_{1,1},\ldots, N^{*}_s\overline{v}_{1,s})$
is a solution to \eqref{dualin}.

\ref{t:3iv}:
For every $n\in\NN$ and every $i\in\{1,\ldots,m\}$ and $k\in\{1,\ldots,s\}$, set
\begin{equation}
\label{e:sva21}
 \begin{cases}
\widetilde{s}_{1,1,n}^i = x^{i}_{1,n} - \gamma_n\big(C_i(x_{1,n}^1,\ldots, x_{1,n}^m)\\
\hspace{3cm}+ \sum_{k=1}^{s}L_{k,i}^*N_{k}^*v_{1,n}^k\big)\\
\widetilde{p}_{1,2,n}^k = x_{2,n}^k -\gamma_n\big( N_{k}^*v_{1,n}^k - M_{k}^*v_{2,n}^k \big)\\
\widetilde{p}_{1,1,n}^i=J_{\gamma_n A_i}(\widetilde{s}_{1,1,n}^i+ \gamma_nz_i)\\
\end{cases}
\quad
\text{and} 
\quad
\begin{cases}
\widetilde{s}_{2,1,n}^k = v_{1,n}^k + \gamma_n\big( 
\sum_{i=1}^mN_kL_{k,i} x_{1,n}^i - N_kx_{2,n}^k  \big)\\
\widetilde{p}_{2,1,n}^k = \widetilde{s}_{2,1,n}^k-\gamma_n \big(N_kr_k\\
\hspace{3cm}+ J_{\gamma_{n}^{-1}D_k}(\gamma_{n}^{-1}\widetilde{s}_{2,1,n}^k -N_kr_k)\big)\\
\widetilde{s}_{2,2,n}^k = v_{2,n}^k + \gamma_nM_kx_{2,n}^k\\
\widetilde{p}_{2,2,n}^k 
= \widetilde{s}_{2,2,n}^k -\gamma_nJ_{\gamma^{-1}_nB_{k}}(\gamma^{-1}_n\widetilde{s}_{2,2,n}^k).\\
\end{cases}
\end{equation}
Since $(\forall i\in\{1,\ldots,m\})\; a_{1,1,n}^i\to 0, b_{1,1,n}^i\to 0$, 
$(\forall k\in\{1,\ldots,s\})\;a_{2,1,n}^k\to 0, a_{2,2,n}^k\to 0$ and
$b_{2,1,n}^k\to 0, b_{2,2,n}^k\to 0$ and since the resolvents of
$(A_i)_{1\leq i\leq m}$, $(B_{k}^{-1})_{1\leq k\leq s}$
and $(D_{k}^{-1})_{1\leq k\leq s}$ are nonexpansive, we obtain
\begin{equation}
\begin{cases} 
(\forall i\in\{1,\ldots, m\})\;
\widetilde{p}_{1,1,n}^i - p_{1,1,n}^i\to 0\\
(\forall k\in\{1,\ldots, s\})\;
\widetilde{p}_{1,2,n}^k-p_{1,2,n}^k\to 0
\end{cases}
\quad \text{and}\quad
\begin{cases}
(\forall k\in \{1,\ldots,s\})
\;\widetilde{p}_{2,1,n}^k-p_{2,1,n}^k \to 0\\
(\forall k\in \{1,\ldots,s\})\;
\widetilde{p}_{2,2,n}^k-p_{2,2,n}^k \to 0.
\end{cases}
\end{equation}
In turn, by \ref{t:2i} and \ref{t:2ii}, we obtain
\begin{equation}
\label{e:hoaquangoc}
\begin{cases}
(\forall i\in\{1,\ldots,m\})\quad
\widetilde{p}_{1,1,n}^i - x_{1,n}^i\to 0, 
\quad \widetilde{p}_{1,1,n}^i \weakly \overline{x}_{1,i}\\
(\forall k\in\{1,\ldots,s\})\quad
\widetilde{p}_{1,2,n}^k-p_{1,2,n}^k \to 0,
\quad \widetilde{p}_{1,2,n}^k\weakly \overline{x}_{2,k}\\
\end{cases}
\end{equation}
and 
\begin{equation}
\label{e:fffd}
(\forall k\in\{1,\ldots,s\})\quad
\begin{cases}
 \widetilde{p}_{2,1,n}^k - v_{1,n}^k\to 0,\quad \widetilde{p}_{2,1,n}^k \weakly \overline{v}_{1,k}\\
\widetilde{p}_{2,2,n}^k-v_{2,n}^k \to 0,\quad \widetilde{p}_{2,2,n}^k\weakly \overline{v}_{2,k}.\\
\end{cases}
\end{equation}
Set 
\begin{equation}
\label{e:minus}
(\forall n\in\NN)\quad
 \begin{cases}
  \widetilde{\pp}_{1,1,n}= (\widetilde{p}_{1,1,n}^1,\ldots, \widetilde{p}_{1,1,n}^m)\\
 \widetilde{\pp}_{1,2,n}= (\widetilde{p}_{1,2,n}^1,\ldots,\widetilde{p}_{1,2,n}^s)\\
 \end{cases}
\quad \text{and}\quad 
 \begin{cases}
  \widetilde{\pp}_{2,1,n}= (\widetilde{p}_{2,1,n}^1,\ldots, \widetilde{p}_{2,1,n}^s)\\
  \widetilde{\pp}_{2,2,n}= (\widetilde{p}_{2,2,n}^1,\ldots, \widetilde{p}_{2,2,n}^s).\\
 \end{cases}
\end{equation}
Then, it follows from \eqref{e:fffd} that 
\begin{equation}
\label{e:vn1}
\begin{cases} 
\gamma_{n}^{-1}(\xx_{1,n}-\widetilde{\pp}_{1,1,n})\to 0\\
\gamma_{n}^{-1}(\xx_{2,n}-\widetilde{\pp}_{1,2,n})\to 0\\
\end{cases}
\quad \text{and}\quad 
\begin{cases} 
\gamma_{n}^{-1}(\vv_{1,n}-\widetilde{\pp}_{2,1,n})\to 0\\
\gamma_{n}^{-1}(\vv_{2,n}-\widetilde{\pp}_{2,2,n})\to 0.\\
\end{cases}
\end{equation}
Furthermore, we derive from
\eqref{e:sva21} that, 
for every $i\in \{1,\ldots,m\}$ and  $k\in \{1,\ldots,s\}$
\begin{equation}
\label{e:dualsola}
(\forall n\in\NN)\quad
 \begin{cases}
 \gamma^{-1}_n(x^{i}_{1,n}-\widetilde{p}_{1,1,n}^i)
-\sum_{k = 1}^s L_{k,i}^*N_{k}^*v_{1,n}^k - C_i(x_{1,n}^1,\ldots,x_{1,n}^m) \in -z_i + A_i\widetilde{p}_{1,1,n}^i\\ 
\gamma^{-1}_n(\widetilde{s}_{2,2,n}^k-\widetilde{p}_{2,2,n}^k)  
\in  B_{k}^{-1}\widetilde{p}_{2,2,n}^k\\
\gamma^{-1}_n(\widetilde{s}_{2,1,n}^k-\widetilde{p}_{2,1,n}^k)  
\in r_k+ D_{k}^{-1}\widetilde{p}_{2,1,n}^k.\\
 \end{cases}
\end{equation}
Since $A_j$ is uniformly monotone at $\overline{x}_{1,j}$, 
using \eqref{e:dualsola} and \eqref{e:inclusion1},
there exists an increasing function
$\phi_{A_j}\colon\left[0,+\infty\right[\to\left[0,+\infty\right]$ 
vanishing only at $0$ such that, for every $n\in\NN$,  
\begin{alignat}{2}
\phi_{A_j}(\|\widetilde{p}_{1,1,n}^j -\overline{x}_{1,j}\|) 
&\leqslant\scal{\widetilde{p}_{1,1,n}^j
-\overline{x}_{1,j}}{\gamma_{n}^{-1}(x_{1,n}^j-\widetilde{p}_{1,1,n}^j) 
-\sum_{k = 1}^sL_{k,j}^*N_{k}^*(v_{1,n}^k-\overline{v}_{1,k})
-(C_j\xx_{1,n}-C_j\bar{\xx}_{1})}\notag\\
&= \scal{\widetilde{p}_{1,1,n}^j-\overline{x}_{1,j}}{\gamma_{n}^{-1}(x_{1,n}^j
-\widetilde{p}_{1,1,n}^j}-\sum_{k = 1}^s\scal{\widetilde{p}_{1,1,n}^j
-\overline{x}_{1,j}}{L_{k,j}^*N_{k}^*(v_{1,n}^k - \overline{v}_{1,k})}\notag\\
&\quad-\chi_{j,n}\label{cz1},
\end{alignat}
where we denote $\big(\forall n\in\NN\big)\; 
\chi_{j,n} = \scal{\widetilde{p}_{1,1,n}^j 
-\bar{x}_{1,j}}{C_j\xx_{1,n} -C_j\bar{\xx}_{1}}$.
Therefore,
\begin{alignat}{2}
\label{e:add4}
 \phi_{A_j}(\|\widetilde{p}_{1,1,n}^j-\overline{x}_{1,j}\|)&\leq 
\scal{\widetilde{\pp}_{1,1,n} -\overline{\xx}_{1}}{\gamma_{n}^{-1}(\xx_{1,n}-\widetilde{\pp}_{1,1,n}}- 
\scal{\widetilde{\pp}_{1,1,n}
-\overline{\xx}_{1}}{\LL^*\NNN^*(\vv_{1,n} - \overline{\vv}_{1})}-\chi_{n}\notag\\
&=\scal{\widetilde{\pp}_{1,1,n} -\overline{\xx}_{1}}{\gamma_{n}^{-1}(\xx_{1,n}-\widetilde{\pp}_{1,1,n}}
- \scal{\widetilde{\pp}_{1,1,n}-\xx_{1,n}}{\LL^*\NNN^*(\vv_{1,n} - \overline{\vv}_{1})}\notag\\
&\quad-\scal{\xx_{1,n}-\overline{\xx}_1}{\LL^*\NNN^*(\vv_{1,n} - \overline{\vv}_{1})}-\chi_{n},
\end{alignat}
where $\chi_n = \sum_{i=1}^m\chi_{i,n} = 
\scal{\widetilde{\pp}_{1,1,n} -\bar{\xx}_{1}}{\CCC\xx_{1,n} -\CCC\bar{\xx}_{1}}$. 
Since $(B_{k}^{-1})_{1\leq k\leq s}$ and $(D_{k}^{-1})_{1\leq k\leq s}$  are monotone,
we derive from \eqref{e:inclusion2} and \eqref{e:dualsola} that for every $ k\in\{1,\ldots,s\}$,
\begin{equation}  
 \begin{cases}
0\leq\scal{\widetilde{p}_{2,1,n}^k-\overline{v}_{1,k}}{\gamma_{n}^{-1}(v_{1,n}^k
-\widetilde{p}_{2,1,n}^k) +\sum_{i=1}^m N_kL_{k,i}(x_{1,n}^i-\overline{x}_{1,i})-N_k(x_{2,n}^k-\overline{x}_{2,k})}\\
0\leq\scal{\widetilde{p}_{2,2,n}^k-\overline{v}_{2,k}}{\gamma_{n}^{-1}(v_{2,n}^k-\widetilde{p}_{2,2,n}^k)
+M_k(x_{2,n}^k-\overline{x}_{2,k})},
\end{cases}
\end{equation}
 which implies that 
\begin{equation}
\label{e:add2}
0\leq \scal{\widetilde{\pp}_{2,2,n} 
-\overline{\vv}_{2}}{\gamma_{n}^{-1}(\vv_{2,n}-\widetilde{\pp}_{2,2,n})}
+\scal{\widetilde{\pp}_{2,2,n} -\overline{\vv}_{2}}{ \MM(\xx_{2,n}-\overline{\xx}_{2})}\\
\end{equation}
and 
\begin{alignat}{2}
\label{e:add1}
0&\leq \scal{\widetilde{\pp}_{2,1,n}
-\overline{\vv}_{1}}{\gamma_{n}^{-1}(\vv_{1,n}-\widetilde{\pp}_{2,1,n}) }
+\scal{\NNN\LL(\xx_{1,n}-\overline{\xx}_1)}{\widetilde{\pp}_{2,1,n} 
-\overline{\vv}_{1}}\notag\\
&\hspace{7cm}-\scal{\widetilde{\pp}_{2,1,n}
-\overline{\vv}_{1}}{ \NNN(\xx_{2,n}-\overline{\xx}_{2})}.
\end{alignat}
We expand $(\chi_n)_{n\in\NN}$ as
\begin{alignat}{2}
\label{e:expa}
(\forall n\in\NN)\quad
\chi_n &= \scal{\xx_{1,n}-\overline{\xx}_1}{\CCC\xx_{1,n}-\CCC\overline{\xx}_1}
 +\scal{\widetilde{\pp}_{1,1,n} -\xx_{1,n}}{ \CCC\xx_{1,n}-\CCC\overline{\xx}}\notag\\
&\geq \scal{\widetilde{\pp}_{1,1,n} -\xx_{1,n}}{ \CCC\xx_{1,n}-\CCC\overline{\xx}},
\end{alignat}
where the last inequality follows from the monotonicity of $\CCC$. 
Now, adding~\eqref{e:add1}, \eqref{e:add2}, \eqref{e:add4}, \eqref{e:expa} 
and using $\MM^*\overline{\vv}_2 = \NNN^*\overline{\vv}_1$,
 we obtain, 
\begin{alignat}{2}
\label{e:vn3}
 \phi_{A_j}(\|\widetilde{p}_{1,1,n}^j -\overline{x}_{1,j}\|)&\leq 
\scal{\widetilde{\pp}_{1,1,n} -\overline{\xx}_{1}}{\gamma_{n}^{-1}(\xx_{1,n}-\widetilde{\pp}_{1,1,n}}
- \scal{\widetilde{\pp}_{1,1,n}-\xx_{1,n}}{\LL^*\NNN^*(\vv_{1,n} - \overline{\vv}_{1})}\notag\\
&\quad+ \scal{\widetilde{\pp}_{2,2,n} 
-\overline{\vv}_{2}}{\gamma_{n}^{-1}(\vv_{2,n}-\widetilde{\pp}_{2,2,n})}
+\scal{\widetilde{\pp}_{2,1,n}
-\overline{\vv}_{1}}{\gamma_{n}^{-1}(\vv_{1,n}-\widetilde{\pp}_{2,1,n})}\notag\\
&\quad+\scal{\MM^*\widetilde{\pp}_{2,2,n} - \NNN^*\widetilde{\pp}_{2,1,n}}{\xx_{2,n}-\overline{\xx}_{2}}
+\scal{\NNN\LL(\xx_{1,n}-\overline{\xx}_1)}{\widetilde{\pp}_{2,1,n} 
-\vv_{1,n}}\notag\\
&\quad -\chi_n.
\end{alignat}
We next derive from \eqref{e:forwardback}   that 
\begin{alignat}{2}
(\forall k\in\{1,\ldots,s\})&\quad
M^{*}_kp_{2,2,n}^k- N^{*}_kp_{2,1,n}^k= \gamma_{n}^{-1}(p_{1,2,n}^k -q_{1,2,n}^k) + c_{1,2,n}^k,
\end{alignat}
which and \eqref{e:minus}, \eqref{e:vn1}, and \cite[Theorem 2.5(i)]{siop2} imply that 
\begin{equation}
 \MM^{*}\widetilde{\pp}_{2,2,n}^k- \NNN^{*}\widetilde{\pp}_{2,1,n}^k\to 0.
\end{equation}
Furthermore, since $((\xx_{i,n})_{n\in\NN})_{1\leq i\leq 2}$ and 
$(\widetilde{\pp}_{1,1,n})_{n\in\NN}$, 
$(\widetilde{\pp}_{2,1,n})_{n\in\NN}$, 
$(\widetilde{\pp}_{2,2,n})_{n\in\NN}$,
$(\vv_{1,n})_{n\in\NN}$ 
 converge weakly, they are bounded. Hence 
\begin{equation}
\label{e:vn}
\tau = \sup_{n\in\NN}\{\max_{1\leq i\leq2}
\{\|\xx_{i,n}-\overline{\xx}_i\|,\|\widetilde{\pp}_{2,i,n}-\overline{\vv}_i\|
,\|\widetilde{\pp}_{1,1,n}-\overline{\xx}_1\|\},
\|\vv_{1,n}-\overline{\vv}_1\|
 \} <+\infty.
\end{equation}
Then, using Cauchy-Schart, the Lipchitzianity of $\CCC$ and 
\eqref{e:vn}, \eqref{e:vn1}, 
it follows from \eqref{e:vn3} that
\begin{alignat}{2}
\label{e:vn33}
\phi_{A_j}(\|\widetilde{p}_{1,1,n}^j -\overline{x}_{1,j}\|)&\leq 
\tau\bigg(\big(\gamma_{n}^{-1}+\|\NNN\LL\|\big)\big(\|\widetilde{\pp}_{1,1,n} -\overline{\xx}_{1} \|+ 
\|\widetilde{\pp}_{2,1,n} -\vv_{1,n}\|\big) +\|\gamma_{n}^{-1}(\vv_{2,n}-\widetilde{\pp}_{2,2,n})\| \notag\\
&\quad
+\|\gamma_{n}^{-1}(\vv_{1,n}-\widetilde{\pp}_{2,1,n})\|
+ \mu\|\widetilde{\pp}_{1,1,n} -\overline{\xx}_{1} \| \bigg)\notag\\
&\quad\to 0,
\end{alignat}
in turn, $\widetilde{p}_{1,1,n}^j \to\overline{x}_{1,j}$ 
and hence, by \eqref{e:hoaquangoc}, $x_{1,n}^j\to \overline{x}_{1,j}$.
 
\ref{t:3v}: Since $\CCC$ is uniformly at $\overline{\xx}_1$, there exists an 
increasing function $\phi_{\CCC}\colon \left[0,+\infty\right[\to \left[0,+\infty\right]$
vanishing only at $0$
such that
\begin{equation}
(\forall n\in\NN)\quad \scal{\xx_{1,n}-\overline{\xx}_1}{\CCC\xx_{1,n}- \CCC\overline{\xx}_1}
\geq \phi_{\CCC}(\|\xx_{1,n}-\overline{\xx}_1\|),
\end{equation}
and hence, \eqref{e:expa} becomes
\begin{alignat}{2}
\label{e:expaa}
(\forall n\in\NN)\quad
\chi_n &= \scal{\xx_{1,n}-\overline{\xx}_1}{\CCC\xx_{1,n}- \CCC\overline{\xx}_1}
 + \scal{\widetilde{\pp}_{1,1,n} -\xx_{1,n}}{ \CCC\xx_{1,n}- \CCC\overline{\xx}}\notag\\
&\geq \scal{\widetilde{\pp}_{1,1,n} -\xx_{1,n}}{ \CCC\xx_{1,n}- \CCC\overline{\xx}}
+ \phi_{\CCC}(\|\xx_{1,n}-\overline{\xx}_1\|).
\end{alignat}
Processing as in \ref{t:3iv}, \eqref{e:vn33} becomes 
\begin{alignat}{2}
\label{e:sds1}
 \phi_{\CCC}(\|\xx_{1,n}-\overline{\xx}_1\|)
&\leq 
\tau\bigg(\big(\gamma_{n}^{-1}+\|\NNN\LL\|\big)\big(\|\widetilde{\pp}_{1,1,n} -\overline{\xx}_{1} \|+ 
\|\widetilde{\pp}_{2,1,n} -\vv_{1,n}\|\big) +\|\gamma_{n}^{-1}(\vv_{2,n}-\widetilde{\pp}_{2,2,n})\| \notag\\
&\quad
+\|\gamma_{n}^{-1}(\vv_{1,n}-\widetilde{\pp}_{2,1,n})\|
+ \mu\|\widetilde{\pp}_{1,1,n} -\overline{\xx}_{1} \| \bigg)\notag\\
&\quad\to 0,
\end{alignat}
in turn, $\xx_{1,n}\to\overline{\xx}_1$ or equivalently 
$(\forall i\in\{1,\ldots,m\})\; x_{1,n}^i\to\overline{x}_{1,i}$.

\ref{t:3vi}: Using the same argument as in the proof of \ref{t:3v}, 
we reach at \eqref{e:sds1} where 
$\phi_{\CCC}(\|\xx_{1,n}-\overline{\xx}_1\|)$ is replaced by 
$\phi_{j}(\|x_{1,n}^j-\overline{x}_{1,j}\|)$, 
and hence we obtain the conclusion.

\ref{t:3viv}\&\ref{t:3vv}: Using the same argument as in the proof of \ref{t:3v}.   
\end{proof}
\begin{remark} Here are some remarks.
\begin{enumerate}
\item In the special case when $m=1$ and $(\forall k\in\{1,\ldots,s\})\;\GG_k=\HH_1, L_{k,i} =\Id$,
algorithm \eqref{e:forwardback} reduces to the recent algorithm proposed in \cite[Eq.(3.15)]{plc13b} where 
the convergence results are proved under the same conditions.
\item In the special case when $m=1$ and $C_1$ is restricted to be cocoercive, i.e, $C_{1}^{-1}$ is strongly 
monotone, an alternative algorithm proposed in \cite{Botb} can be used to solve Problem \ref{prob}.
 \item In the case when $(\forall k\in\{1\ldots,s\})(\forall i\in\{1,\ldots,m\})\; L_{k,i} =0$, 
algorithm \eqref{e:forwardback} is separated into two different 
algorithms which solve respectively the first $m$ inclusions
and the last $k$ inclusions in \eqref{dualin} independently. 
\item Condition \eqref{csoco} is satisfied, for example, when each $C_i$ is restricted 
to be univariate and monotone, and $C_j$ is uniformly monotone.
\end{enumerate}
 \end{remark}
\section{Applications to minimization problems}
The algorithm proposed has a structure of the 
forward-backward-forward splitting as in \cite{plc13b,siop2,Combettes13,plc6,Tseng00}.
The applications of this type of algorithm to specific problems in applied mathematics can be found in 
\cite{livre1,plc13b,Luis11,siop2,Combettes13,plc6,anna,Tseng00} and references therein.
We provide an application to the following minimization problem which extends  
\cite[Problem 4.1]{plc13b} and \cite[Problem 4.1]{Botb}. We recall that 
the infimal convolution of the two functions $f$ and $g$ from $\HH$ to $\left]-\infty,+\infty\right]$ is 
\begin{equation}
 f\;\vuo\; g\colon x \mapsto \inf_{y\in\HH}(f(y)+g(x-y)).
\end{equation}
\begin{problem}
\label{prob1}
Let $m,s$ be strictly positive integers.
For every $i\in \{1,\ldots,m\}$, 
let $(\HH_i,\scal{\cdot}{\cdot})$ be a real Hilbert space,
let $z_i\in\HH_i$, let $f_{i}\in\Gamma_0(\HH_i)$,
let $\varphi\colon \HH_1\times\ldots\times\HH_m\to\RR$ be 
convex differentiable function with $\nu_0$-Lipschitz continuous gradient 
$\nabla \varphi = (\nabla_1\varphi,\ldots,\nabla_m\varphi)$,
for 
some $\nu_0\in \left[0,+\infty\right[$. 
For every $k\in\{1,\ldots, s\}$, 
let $(\GG_k,\scal{\cdot}{\cdot})$, $(\YY_k,\scal{\cdot}{\cdot})$ and 
$(\XX_k,\scal{\cdot}{\cdot})$ be real Hilbert spaces, 
let $r_k \in \GG_k$, 
let $g_{k}\in\Gamma_0(\YY_k)$, let $\ell_k\in\Gamma_0(\XX_k)$,
let $M_k\colon \GG_k\to \YY_k$  and $N_k\colon \GG_k\to \XX_k$  
be  bounded linear operators.
For every $i\in\{1,\ldots, m\}$ and every $k\in\{1,\ldots,s\}$,
let $L_{k,i}\colon\HH_i \to\GG_k$ 
be a  bounded linear operator. 
The primal problems is to 
\begin{alignat}{2} 
\label{primal2}
&\underset{x_1\in\HH_1,\ldots, x_m\in\HH_m}{\text{minimize}}
 \sum_{k=1}^s\big((\ell_k\circ N_k)\;\vuo\; (g_{k}\circ M_k)
\big)\bigg(\sum_{i=1}^m L_{k,i}x_i -r_k\bigg)\notag\\
&\hspace{5cm} +\sum_{i=1}^{m}\big(f_i(x_i) - \scal{x_i}{z_i}\big)
+\varphi(x_1,\ldots,x_m),
\end{alignat}
and the dual problem is to 
\begin{alignat}{2}
\label{dual2} 
&\underset{v_1\in\GG_1,\ldots, v_s\in\GG_s}{\text{minimize}}
\bigg(\varphi^{*}\;\vuo\; 
\bigg(\sum_{i=1}^m f_{i}^*\bigg)\bigg)\bigg(\Big(z_i-\sum_{k=1}^s L_{k,i}^*v_k\Big)_{1\leq i\leq m} \bigg)\notag\\
&\hspace{4cm} 
 +\sum_{k=1}^{s}\bigg( (\ell_{k}\circ N_k)^*(v_k) + ( g_{k}\circ M_k)^*(v_k) + \scal{v_k}{r_k}\bigg).
\end{alignat}
\end{problem}
\begin{corollary}
In Problem \ref{prob1}, suppose that \eqref{e:cons2} is satisfied and
for every $ (k,i)\in\{1,\ldots,s\}\times\{1,\ldots,m\}$
\begin{equation}
\label{e:dion1}
0\in \sri\big(\dom (\ell_{k}\circ N_k)^*)-\dom (g_{k}\circ M_k)^{*}\big),
\end{equation}
and 
\begin{equation}
\label{e:dion2}
z_i\in\ran\bigg( \partial f_i+ \sum_{k=1}^s L_{k,i}^*\circ
\Big((N_{k}^*\circ(\partial \ell_k)\circ N_k) \;\vuo\; 
(M_{k}^*\circ (\partial g_{k})\circ M_k) 
\Big)\circ\bigg(\sum_{j=1}^m L_{k,j}\cdot-r_k\bigg) + 
\nabla_{i}\varphi\bigg).
\end{equation}
For every $i\in\{1,\ldots,m\}$,
let $(a^{i}_{1,1,n})_{n\in\NN}$, $(b^{i}_{1,1,n})_{n\in\NN}$, $(c^{i}_{1,1,n})_{n\in\NN}$ 
be absolutely 
summable sequences in $\HH_i$, 
for every $k\in\{1,\ldots,s\}$,
let $(a^{k}_{1,2,n})_{n\in\NN}$, $(c^{k}_{1,2,n})_{n\in\NN}$ be  
absolutely summable sequences in $\GG_k$,
let $(a^{k}_{2,1,n})_{n\in\NN}$
$(b^{k}_{2,1,n})_{n\in\NN}$, $(c^{k}_{2,1,n})_{n\in\NN}$ absolutely 
summable sequences in $\XX_k$,
$(a^{k}_{2,2,n})_{n\in\NN}$, $(b^{k}_{2,2,n})_{n\in\NN}$, $(c^{k}_{2,2,n})_{n\in\NN}$ 
be  absolutely summable sequences in $\YY_k$. For every $i\in\{1,\ldots,m\}$ and $k\in\{1,\ldots,s\}$,
let $x^{i}_{1,0} \in \HH_i$, $x_{2,0}^k \in \GG_k$ and 
$v_{1,0}^k \in \XX_k$, $v_{2,0}^k \in \YY_k$,  
let $\varepsilon \in \left]0,1/(\beta+1)\right[$, 
let $(\gamma_n)_{n\in\NN}$ be sequence in 
$\left[\varepsilon, (1-\varepsilon)/\beta \right]$ and set
\begin{equation}
\label{e:forwardback1}
\begin{array}{l}
\operatorname{For}\;n=0,1,\ldots,\\
\left\lfloor
\begin{array}{l}
\operatorname{For}\;i=1,\ldots, m\\
\left\lfloor
\begin{array}{l}
s_{1,1,n}^i = x_{1,n}^i -\gamma_n\big(\nabla_i\varphi(x_{1,n}^1,\ldots,x_{1,n}^m) 
+ \sum_{k=1}^sL_{k,i}^*N_{k}^*v_{1,n}^k + a_{1,1,n}^i\big)\\
p_{1,1,n}^i = \prox_{\gamma_n f_i}(s_{1,1,n}^i +\gamma_n z_i) + b_{1,1,n}^i \\
\end{array}
\right.\\[2mm]
\operatorname{For}\;k=1,\ldots,s\\
\left\lfloor
\begin{array}{l}
p_{1,2,n}^k = x_{2,n}^k +\gamma_n\big( 
 N_{k}^*v_{1,n}^k - M_{k}^*v_{2,n}^k + a_{1,2,n}^k \big)\\
s_{2,1,n}^k = v_{1,n}^k + \gamma_n\big( 
\sum_{i=1}^mN_kL_{k,i} x_{1,n}^i - N_kx_{2,n}^k + a_{2,1,n}^k \big)\\
p_{2,1,n}^k = s_{2,1,n}^k-\gamma_n \big(N_kr_k+ 
\prox_{\gamma_{n}^{-1}\ell_k}(\gamma_{n}^{-1}s_{2,1,n}^k -N_kr_k) + b_{2,1,n}^k \big)\\
q_{2,1,n}^k = p_{2,1,n}^k 
+\gamma_n\big( N_k\sum_{i=1}^mL_{k,i}p_{1,1,n}^i-N_kp_{1,2,n}^k +c_{2,1,n}^k \big) \\
v_{1,n+1}^k = v_{1,n}^k - s_{2,1,n}^k + q_{2,1,n}^k\\
s_{2,2,n}^k = v_{2,n}^k + \gamma_n\big( 
M_kx_{2,n}^k + a_{2,2,n}^k \big)\\
p_{2,2,n}^k = s_{2,2,n}^k-\gamma_n \big( 
J_{\gamma_{n}^{-1}g_k}(\gamma_{n}^{-1}s_{2,2,n}^k) + b_{2,2,n}^k \big)\\
q_{2,2,n}^k = p_{2,2,n}^k 
+\gamma_n\big( M_kp_{1,2,n}^k +c_{2,2,n}^k \big) \\
v_{2,n+1}^k = v_{2,n}^k - s_{2,2,n}^k + q_{2,2,n}^k\\
q_{1,2,n}^k = p_{1,2,n}^k +\gamma_n\big(
N_{k}^*p_{2,1,n}^k - M_{k}^*p_{2,2,n}^k + c_{1,2,n}^k \big)\\
x_{2,n+1}^k = x_{2,n}^k -p_{1,2,n}^k+ q_{1,2,n}^k\\
\end{array}
\right.\\[2mm]
\operatorname{For}\;i=1,\ldots, m\\
\left\lfloor
\begin{array}{l}
q_{1,1,n}^i = p_{1,1,n}^i -\gamma_n\big(\nabla_i\varphi(p_{1,1,n}^1,\ldots,p_{1,1,n}^m) 
+ \sum_{k=1}^sL_{k,i}^*N^{*}_kp_{2,1,n}^k + c_{1,1,n}^{i} \big)\\
x_{1,n+1}^i = x_{1,n}^i -s_{1,1,n}^i+ q_{1,1,n}^i.\\
\end{array}
\right.\\[2mm]
\end{array}
\right.\\[2mm]
\end{array}
\end{equation}
Then the following hold for each $i\in\{1,\ldots,m\}$ and $k\in\{1,\ldots,s\}$,
\begin{enumerate}
 \item \label{t:2i}
$\sum_{n\in\NN}\|x_{1,n}^i - p_{1,1,n}^i \|^2 < +\infty $\quad\text{and}\quad 
$\sum_{n\in\NN}\|x_{2,n}^k - p_{1,2,n}^k \|^2 < +\infty $.
 \item \label{t:2ii}
$\sum_{n\in\NN}\|v_{1,n}^k - p_{2,1,n}^k \|^2 < +\infty $\quad \text{and}\quad
$\sum_{n\in\NN}\|v_{2,n}^k - p_{2,2,n}^k \|^2 < +\infty $.
\item \label{t:2iii}
 $x_{1,n}^i\weakly \overline{x}_{1,i}$, 
$v_{1,n}^k\weakly \overline{v}_{1,k}$,
$v_{2,n}^k\weakly \overline{v}_{2,k}$ and 
 such that 
$ M_{k}^*\overline{v}_{2,k} 
=  N_{k}^*\overline{v}_{1,k}$ and that 
$(\overline{x}_{1,1},\ldots,\overline{x}_{1,m}$ solves \eqref{primal2} and 
$(N_{1}^*\overline{v}_{1,1},\ldots,N_{s}^*\overline{v}_{1,s})$ solves \eqref{dual2}.
\item 
\label{t:3iv} 
Suppose that $f_j$ is uniformly convex at $\overline{x}_{1,j}$, for some 
$j\in\{1,\ldots,  m\}$, then $x_{1,n}^j \to \overline{x}_{1,j}$. 
\item 
\label{t:3v} 
Suppose that $\varphi$ is uniformly convex at 
$(\overline{x}_{1,1},\ldots,\overline{x}_{1,m})$, then $(\forall i\in\{1,\ldots,m\})\; 
x_{1,n}^i\to \overline{x}_{1,i}$.
 \item 
\label{t:3vv} 
Suppose that $\ell^{*}_j$ is uniformly convex at $\overline{v}_{1,j}$, for some 
$j\in\{1,\ldots,  k\}$, then $v_{1,n}^j \to \overline{v}_{1,j}$. 
\item 
\label{t:3viv} 
Suppose that $g^{*}_j$ is uniformly convex at $\overline{v}_{2,j}$, for some 
$j\in\{1,\ldots,  k\}$, then $v_{2,n}^j \to \overline{v}_{2,j}$. 
\end{enumerate}
\end{corollary}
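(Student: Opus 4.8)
The plan is to deduce the Corollary from Theorem~\ref{t:2} by specialising the operators so that the monotone inclusion \eqref{dualin} becomes the primal--dual optimality system of the pair \eqref{primal2}--\eqref{dual2}. For every $i\in\{1,\ldots,m\}$ and every $k\in\{1,\ldots,s\}$ I would set $A_i=\partial f_i$, $B_k=\partial g_k$, $D_k=\partial\ell_k$ and $C_i=\nabla_i\varphi$. Since $f_i\in\Gamma_0(\HH_i)$, $g_k\in\Gamma_0(\YY_k)$ and $\ell_k\in\Gamma_0(\XX_k)$, the operators $\partial f_i$, $\partial g_k$ and $\partial\ell_k$ are maximally monotone. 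Because $\varphi$ is convex, differentiable and has a $\nu_0$-Lipschitz gradient, the map $(x_i)_{1\le i\le m}\mapsto(\nabla_i\varphi(x_1,\ldots,x_m))_{1\le i\le m}$ is monotone and $\nu_0$-Lipschitz, so both estimates in \eqref{coco} hold with this $\nu_0$; in particular the constant $\beta$ of \eqref{e:cons2} is the same, hence \eqref{e:cons2} is exactly the hypothesis assumed. Finally $J_{\gamma\partial f_i}=\prox_{\gamma f_i}$, $J_{\gamma\partial g_k}=\prox_{\gamma g_k}$ and $J_{\gamma\partial\ell_k}=\prox_{\gamma\ell_k}$, so with these identifications the recursion \eqref{e:forwardback1} coincides with \eqref{e:forwardback}, and the hypotheses on the stepsizes and on the absolutely summable error sequences carry over verbatim.

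The core step is to check that, under \eqref{e:dion1} and \eqref{e:dion2}, the inclusion \eqref{dualin} is equivalent to optimality for \eqref{primal2}--\eqref{dual2}, and that $\Omega\neq\emp$. I would first use the qualification condition \eqref{e:dion1} together with the subdifferential calculus of \cite{livre1} to obtain, for each $k$, the identity $(\partial((\ell_k\circ N_k)\,\vuo\,(g_k\circ M_k)))^{-1}=\partial(\ell_k\circ N_k)^*+\partial(g_k\circ M_k)^*=(N_k^*\circ\partial\ell_k\circ N_k)^{-1}+(M_k^*\circ\partial g_k\circ M_k)^{-1}$, which is exactly the operator appearing in the last $s$ lines of \eqref{dualin} once $B_k=\partial g_k$ and $D_k=\partial\ell_k$. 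Applying Fermat's rule to \eqref{primal2} together with the sum and chain rules for subdifferentials then shows that a point is primal--dual optimal if and only if it solves \eqref{dualin}; condition \eqref{e:dion2} states precisely that $z_i$ lies in the range of the primal operator, so a solution exists and $\Omega\neq\emp$. Consequently Theorem~\ref{t:2} applies: parts (i)--(ii) of the Corollary are parts (i)--(ii) of Theorem~\ref{t:2}, while part (iii) follows from Theorem~\ref{t:2}(iii) together with the above equivalence, which turns the weak limits in $\Omega$ into a minimiser of \eqref{primal2} and a minimiser of \eqref{dual2}.

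For the strong-convergence statements I would translate convexity of the data into monotonicity of the operators, using the standard fact from \cite{livre1} that a function uniformly convex at a point has a subdifferential that is uniformly monotone there. If $f_j$ is uniformly convex at $\overline{x}_{1,j}$ then $A_j=\partial f_j$ is uniformly monotone there, so part (iv) follows from Theorem~\ref{t:2}(iv); if $\varphi$ is uniformly convex at $(\overline{x}_{1,1},\ldots,\overline{x}_{1,m})$ then $\CCC=\nabla\varphi$ is uniformly monotone there, giving part (v) from Theorem~\ref{t:2}(v). For the dual variables I would invoke $(\partial\ell_j)^{-1}=\partial\ell_j^*$ and $(\partial g_j)^{-1}=\partial g_j^*$: uniform convexity of $\ell_j^*$ at $\overline{v}_{1,j}$ makes $D_j^{-1}=\partial\ell_j^*$ uniformly monotone there, whence part (vi) from Theorem~\ref{t:2}(vii), and likewise uniform convexity of $g_j^*$ at $\overline{v}_{2,j}$ makes $B_j^{-1}=\partial g_j^*$ uniformly monotone, yielding part (vii) from Theorem~\ref{t:2}(viii).

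The only genuinely delicate point is the equivalence established in the second paragraph: one must justify interchanging subdifferentiation with the linear compositions $\ell_k\circ N_k$ and $g_k\circ M_k$ and with the infimal convolution, which is exactly where the interiority/qualification hypothesis \eqref{e:dion1} is needed and where some care is required to match the conjugate sum rule with the parallel-sum structure of \eqref{dualin}. Everything else is a routine substitution of the subdifferential and gradient choices into Theorem~\ref{t:2}.
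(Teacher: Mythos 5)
Your proposal follows essentially the same route as the paper: specialise $A_i=\partial f_i$, $C_i=\nabla_i\varphi$, $B_k=\partial g_k$, $D_k=\partial\ell_k$, check the hypotheses of Theorem~\ref{t:2} (maximal monotonicity, the Lipschitz/monotonicity condition \eqref{coco} for $\nabla\varphi$, resolvents becoming proximity operators), use \eqref{e:dion2} for $\Omega\neq\emp$, use \eqref{e:dion1} for the subdifferential calculus of the infimal convolution, and convert uniform convexity into uniform monotonicity of the (inverse) subdifferentials for the strong-convergence items. The only point to correct is the chain of identities in your second paragraph: the equality $\partial(\ell_k\circ N_k)^*=(N_k^*\circ\partial\ell_k\circ N_k)^{-1}$ (and likewise for $g_k\circ M_k$) is not available, since $N_k^*\circ(\partial\ell_k)\circ N_k\subset\partial(\ell_k\circ N_k)$ holds only as an inclusion absent a further qualification condition on $\ell_k$ and $N_k$, which is not assumed; consequently the claimed ``if and only if'' between \eqref{dualin} and primal--dual optimality is unjustified. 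This does not break the argument, because only one direction is used: \eqref{e:dion2} yields a solution of \eqref{dualin} directly (it is phrased in terms of the composed operators themselves, not their subdifferential hulls), and for item (iii) one only needs that every solution of \eqref{dualin} is primal--dual optimal, which follows from the unconditional inclusions together with the sum rule that \eqref{e:dion1} licenses --- exactly the one-sided inclusion \eqref{e:set2} the paper uses before invoking Fermat's rule.
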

\begin{proof}  Set
\begin{equation}
\label{e:set1}
 \begin{cases}
 (\forall i\in\{1,\ldots,m\})\quad A_i = \partial f_i\quad \text{and}\quad C_i = \nabla_i\varphi,\\
 (\forall k\in\{1,\ldots,s\})\quad B_k = \partial g_k,\quad D_k = \partial\ell_k.\\
 \end{cases}
\end{equation}
Then it follows from \cite[Theorem 20.40]{livre1} that $(A_i)_{1\leq i\leq m}$, $(B_k)_{1\leq k\leq s}$,
and $(D_k)_{1\leq k\leq s}$ are maximally monotone. Moreover,
 $(C_1,\ldots,C_m) = \nabla\varphi$ is $\nu_0$-Lipschitzian. 
Therefore, every conditions on the operators in Problem \ref{prob} are satisfied.
The condition \eqref{e:dion1} 
 implies that $ (\forall k\in\{1,\ldots, s\})\; \dom (\ell_k\circ N_k)^* \not= \emp$
and $\dom (g_k\circ M_k)^* \not= \emp$. Therefore,
using \cite[Proposition 13.11]{livre1}, we have 
$ (\forall k\in\{1,\ldots, s\})\;(\ell_k\circ N_k)^*\in\Gamma_0(\GG_k)$ and
 $(g_k\circ M_k)^*\in\Gamma_0(\GG_k)$,
 so are $\ell_k\circ N_k$ and $g_k\circ M_k$.
We next derive from \eqref{e:dion1}, \cite[Proposition 12.34]{livre1}, 
\cite[Proposition 24.27]{livre1}, \cite[Propositions 16.5(ii)]{livre1} that  
\begin{alignat}{2}
\label{e:set2}
 \big(\forall k\in\{1,\ldots,s\}\big)\quad 
(M^{*}_k\circ(\partial g_k)\circ M_k)\;\vuo\;(N^{*}_k\circ(\partial \ell_k)\circ N_k) 
&\subset
\partial ((\ell_k\circ N_k)\;\vuo\; (g_k\circ M_k)). 
\end{alignat}
Let $\HHH$ and $\GGG$ be defined as in the proof of Theorem \ref{t:2}, and let
  $\LL,\MM,\NNN, \zz$ and $\rr$ be defined as in \eqref{e:acl}, and define 
\begin{equation}
\begin{cases}
 f\colon\HHH\to \left]-\infty,+\infty\right[\colon \xx\mapsto \sum_{i=1}^m f_i(x_i)\\
g\colon\GGG\to \left]-\infty,+\infty\right[\colon \vv\mapsto \sum_{k=1}^s g_k(v_k)\\
\ell\colon\GGG\to \left]-\infty,+\infty\right[\colon \vv\mapsto \sum_{k=1}^s \ell_k(v_k).
\end{cases}
\end{equation}
 Observe that \cite[Proposition 13.27]{livre1},
\begin{equation}
f^*\colon \yy\mapsto  \sum_{i=1}^m f_{i}^*(y_i),\quad
 g^*\colon \vv \mapsto \sum_{k=1}^sg_{k}^*(v_k),\; \quad \text{and}\quad  
 \ell^*\colon \vv \mapsto\sum_{k=1}^s \ell_{k}^*(v_k).
\end{equation}
We also have 
\begin{equation}
 (\ell\circ \NNN)\;\vuo\; (g\circ \MM )
\colon \vv\mapsto \sum_{k=1}^s \big((\ell_k\circ N_k)\;\vuo\; (g_k\circ M_k)\big)(v_k). 
\end{equation}
Then 
the primal problem becomes 
\begin{equation} 
\label{primal2r}
\underset{\xx\in\HHH}{\text{minimize}}\;
f(\xx) - \scal{\xx}{\zz}
+ ((\ell\circ \NNN)\;\vuo\; (g\circ \MM))(\LL\xx -\rr)
+ \varphi(\xx),
\end{equation}
and the dual problem becomes 
\begin{equation}
\label{dual2r} 
\underset{\vv\in\GGG}{\text{minimize}}\;
(\varphi^{*}\;\vuo\;  f^*)(\zz-\LL^*\vv)
+(\ell\circ \NNN)^*(\vv)  + (g\circ\MM)^*(\vv)+ \scal{\vv}{\rr}.
\end{equation}
Furthermore, the condition \eqref{e:dion2} implies that 
the set of solutions to \eqref{dualin} is non-empty. Let $(\overline{\xx},\overline{\vv}) 
= (\overline{x}_1,\ldots,\overline{x}_m,\overline{v}_1,\ldots,\overline{v}_s)$ 
be a solution to \eqref{dualin}. Then, removing $\overline{v}_1,\ldots,\overline{v}_s$ from \eqref{dualin},
we obtain,
\begin{alignat}{2}
\quad z_i\in \sum_{k=1}^s L_{k,i}^*\bigg(
\Big( (N^{*}_k\circ(\partial \ell_{k})\circ N_k)\;\vuo\; (M^{*}_k\circ(\partial g_{k})\circ M_k)
\Big)\bigg(\sum_{j=1}^m L_{k,j}\overline{x}_j-r_k\bigg)\bigg) \notag\\
\hfill+ \partial f_i(\overline{x}_i)+
\nabla_{i}\varphi(\overline{x}_1,\ldots,\overline{x}_m). 
\end{alignat}
Then, using \eqref{e:set1}, \eqref{e:set2}, 
\cite[Corollary 16.38(iii)]{livre1}, \cite[Proposition 16.8]{livre1}, 
\begin{equation}
\label{set3}
 {\boldsymbol 0}\in \partial\big( f + \scal{\cdot}{\zz}\big)(\overline{\xx})
+\LL^*\Big(\partial((\ell\circ\NNN) \;\vuo\; (g\circ \MM))(\LL\overline{\xx}-\rr)\Big)
+ \nabla \varphi(\overline{\xx}).
\end{equation}
Therefore, by \cite[Proposition 16.5(ii)]{livre1}, we derive from \eqref{set3} that
\begin{equation}
\label{set4}
 {\boldsymbol 0}\in \partial\Big( f + \scal{\cdot}{\zz} 
+ \big((\ell\circ\NNN) \;\vuo\; (g\circ \MM)\big)(\LL\cdot-\rr)
+ \varphi\Big)(\overline{\xx}).
\end{equation}
Hence, by Fermat's rule \cite[Theorem 16.2]{livre1} that 
$\overline{\xx}$ is a solution to \eqref{primal2r}, 
i.e, $\overline{\xx}$ is a solution to \eqref{primal2}.
We next remove  $\overline{x}_1,\ldots,\overline{x}_m$ from \eqref{dualin} and 
 using \cite[Theorem 15.3]{livre1} 
and \cite[Theorem 16.5(ii)]{livre1}, we obtain,
\begin{alignat}{2}
\label{set5}
 -\rr&\in -\LL\Big((\partial f
+ \nabla \varphi)^{-1}(\zz-\LL^*\overline{\vv})\Big) 
+ (\MM^{*}\circ(\partial g)\circ \MM)^{-1}\overline{\vv} + 
(\NNN^*\circ(\partial \ell)\circ \NNN)^{-1}\overline{\vv} \notag\\
& \subset -\LL\Big( \partial (f+ \varphi)^{*}(\zz-\LL^*\overline{\vv})\Big) 
+ (\partial(g\circ\MM))^{-1}\overline{\vv} + (\partial(\ell\circ\NNN))^{-1}\overline{\vv} 
\notag\\
& = -\LL\Big( \partial ( f^*\;\vuo\; \varphi^{*})(\zz-\LL^*\overline{\vv})\Big) 
+ \partial(g\circ\MM)^{*}\overline{\vv} + \partial(\ell\circ\NNN)^{*}\overline{\vv} 
\end{alignat}
Therefore, by \cite[Proposition 16.5(ii)]{livre1}, we derive from \eqref{set5} that
\begin{equation}
 {\boldsymbol 0} \in \partial \Big(  (\varphi^{*}\;\vuo\;  f^*)(\zz-\LL^*\cdot)
+(g\circ\MM)^{*}+(\ell\circ\NNN)^*
\Big)\overline{\vv}.
\end{equation}
Hence, by Fermat's rule \cite[Theorem 16.2]{livre1} 
that $\overline{\vv}$ is a solution to \eqref{dual2r}, 
i.e, $\overline{\vv}$ is a solution to \eqref{dual2}.
Now,  algorithm \eqref{e:forwardback1} is a 
special case of the algorithm \eqref{e:forwardback}. Moreover,
every specific conditions in Theorem \ref{t:2} are satisfied. 
Hence, the conclusions follow from Theorem \ref{t:2} and the fact  that 
the uniform convexity of a function in $\Gamma_0(\HH)$ at a point in its domain 
implies the  uniform monotonicity of its subdifferential at that point.
\end{proof}
\begin{remark} Here are some remarks.
\begin{enumerate}
\item In the special case when $m=1$ and $(\forall k\in\{1,\ldots,s\})\;\GG_k=\HH_1, L_{k,i} =\Id$,
algorithm \eqref{e:forwardback1} reduces to  \cite[Eq.(4.20)]{plc13b}.
\item Some sufficient conditions which ensure that \eqref{e:dion2} is satisfied
are in \cite[Proposition 4.2]{Botb}.
\end{enumerate}
 \end{remark}

The next example will be an application to the problem of recovery an ideal image from 
multi-observation \cite[Eq.(3.4)]{jmma2}. 
\begin{example} 
Let $p,K$,$(q_i)_{1\leq i\leq p}$ be a strictly positive integers, 
let $\HH = \RR^{K}$, and for every $i\in\{1,\ldots,p\}$, let
$\GG_i=\RR^{q_i}$ and $T_i\colon\HH\to\GG_i$
be a linear mapping. Consider the problem of recovery an ideal 
image $\overline{x}$ from 
\begin{equation}
 (\forall i\in\{1,\ldots,p\})\; r_i = T_i\overline{x} + w_i,
\end{equation}
 where each $w_i$ is a noise component.
Let $(\alpha,\beta,\gamma) \in \left]0,+\infty\right[^3$, 
$(\omega_i)_{1\leq i\leq p} \in\left]0,+\infty\right[^p$,
 let $C$ be a non empty, closed convex subset of $\HH$, 
model the prior information of the ideal image. Base on \cite[Eq.(5.4)]{plc13b},
we propose the following variational problem to recover $\overline{x}$, 
\begin{equation}
\label{app1}
 \underset{x\in C}{\text{minimize}} \sum_{k=1}^p\frac{\omega_k}{2}\|r_k-  T_{k}x\|^2 +   
 \gamma\|Wx\|_1+(\alpha \|\cdot\|_{1,2}\circ \nabla)\;\vuo\;(\beta \|\cdot\|_{1,2}\circ \nabla^2)(x),
\end{equation}
where $\nabla$ and $\nabla^2$ are respectively the first and the second order discrete
gradient, $W$ is an analysis operator such as wavelet operator, frame operator, 
the norm $\|\cdot\|_{1,2}$ is defined as in \cite[Eq.(5.5)]{plc13b}.
The problem \eqref{app1} is a special case of the primal problem \eqref{primal2} with
\begin{equation}
 \begin{cases}
s=2, m=1, L_{1,1}= L_{2,1} =\Id,\\
M_1 =\nabla, g_1 = \alpha \|\cdot\|_{1,2}, N_1 =\nabla^2, \ell_1 = \beta \|\cdot\|_{1,2},\\
 M_2 = W, g_2 = \gamma\|\cdot\|_1, \ell_2 =\iota_{\{0\}}, N_2 = \Id,\\
 f_1 = \iota_{C},\varphi = \sum_{k=1}^p\frac{\omega_k}{2}\|r_k-  T_{k}\cdot\|^2.
 \end{cases}
\end{equation}
Using the same argument as in \cite[Section 5.3]{plc13b}, we can check that 
\eqref{e:dion1} and \eqref{e:dion2} are satisfied. A numerical result for the 
case when $p=1$ is presented in \cite[Section 5.4]{plc13b}.
\end{example}
{\bf Acknowledgements} This research work is funded by Vietnam National Foundation for 
Science and Technology Development (NAFOSTED) under Grant No. 102.01-2012.15.

\end{document}